\newtheorem{thm}{Theorem}
\newtheorem{cor}[thm]{Corollary}
\newtheorem{prop}[thm]{Proposition}
\newcommand{\n}[1]{\vert#1\vert}
\newcommand{\nm}[1]{\Vert#1\Vert}
\newcommand{\pr}[1]{(#1)}
\newcommand{\br}[1]{[#1]}
\newcommand{\bre}[1]{\{#1\}}
\newcommand{\Bdy}{\partial\bm{\Omega}}
\newcommand{\BR}{\mathbf{B}_R}
\newcommand{\BRc}{\mathbf{B}^c_R}
\newcommand{\BRo}{\mathbf{B}_{R_0}}
\newcommand{\BRco}{\mathbf{B}^c_{R_0}}
\newcommand{\Om}{\bm{\Omega}}
\newcommand{\R}{\mathbf{R}}
\newcommand{\Rn}{\mathbf{R}^n}
\newcommand{\IBR}{\int_{\mathbf{B}_R}}
\newcommand{\IBRc}{\int_{\mathbf{B}^c_R}}
\newcommand{\IOm}{\int_{\Om}}
\newcommand{\IIOm}{\int_{\Om}\int_{\Om}}
\newcommand{\IRn}{\int_{\Rn}}
\newcommand{\IIRn}{\int_{\Rn}\int_{\Rn}}
\newcommand{\dt}{\,\mathbf{d}t}
\newcommand{\dx}{\,\mathbf{d}x}
\newcommand{\dxy}{\,\mathbf{d}x\mathbf{d}y}
\newcommand{\on}{\bm{\omega}_{n-1}}
\newcommand{\Vn}{\mathbf{V}_n}
\newcommand{\Wc}{\mathbf{W}_{\epsilon}}
\newcommand{\ClOm}{C^1(\Om)}
\newcommand{\CclOm}{C^1_c(\Om)}
\newcommand{\CclRn}{C^1_c(\Rn)}
\newcommand{\CosOm}{C^{0,s}(\Om)}
\newcommand{\CogOm}{C^{0,\gamma}(\Om)}
\newcommand{\CosRn}{C^{0,s}(\Rn)}
\newcommand{\CogcRn}{C^{0,\gamma^*_s}(\Rn)}
\newcommand{\DspRn}{D^{s,p}(\Rn)}
\newcommand{\DstRn}{D^{s,2}(\Rn)}
\newcommand{\LiOm}{L^{\infty}(\Om)}
\newcommand{\LlOm}{L^1(\Om)}
\newcommand{\LrOm}{L^r(\Om)}
\newcommand{\LtOm}{L^t(\Om)}
\newcommand{\LiRn}{L^{\infty}(\Rn)}
\newcommand{\LlRn}{L^1(\Rn)}
\newcommand{\LpRn}{L^p(\Rn)}
\newcommand{\LqRn}{L^q(\Rn)}
\newcommand{\LrRn}{L^r(\Rn)}
\newcommand{\LlocRn}{L^1_{loc}(\Rn)}
\newcommand{\LpcRn}{L^{p^*_s}(\Rn)}
\newcommand{\LqVRn}{L^q_{\mathrm{V}}(\Rn)}
\newcommand{\LrKRn}{L^r_{\mathrm{K}}(\Rn)}
\newcommand{\LtKRn}{L^2_{\mathrm{K}}(\Rn)}
\newcommand{\LtKtRn}{L^2_{\tilde{\mathrm{K}}}(\Rn)}
\newcommand{\LtlocRn}{L^t_{loc}(\Rn)}
\newcommand{\MsqpOm}{M^{s;q,p}(\Om)}
\newcommand{\MspOm}{M^{s;p,p}(\Om)}
\newcommand{\MqpRn}{M^{q,p}(\Rn)}
\newcommand{\MsqpRn}{M^{s;q,p}(\Rn)}
\newcommand{\MVsqpRn}{M^{s;q,p}_{\mathrm{V}}(\Rn)}
\newcommand{\MsqpBR}{M^{s;q,p}(\BR)}
\newcommand{\MVsqpBR}{M^{s;q,p}_{\mathrm{V}}(\BR)}
\newcommand{\MsipOm}{M^{s;\infty,p}(\Om)}
\newcommand{\MsqiOm}{M^{s;q,\infty}(\Om)}
\newcommand{\MspcRn}{M^{s;p^*_s,p}(\Rn)}
\newcommand{\MsipRn}{M^{s;\infty,p}(\Rn)}
\newcommand{\MsqiRn}{M^{s;q,\infty}(\Rn)}
\newcommand{\WspOm}{W^{s,p}(\Om)}
\newcommand{\WospOm}{\tilde{W}^{s,p}_0(\Om)}
\newcommand{\WospBRo}{\tilde{W}^{s,p}_0(\mathbf{B}_{R_0})}
\newcommand{\WlpRn}{W^{1,p}(\Rn)}
\newcommand{\WspRn}{W^{s,p}(\Rn)}
\newcommand{\WVspRn}{W^{s,p}_{\mathrm{V}}(\Rn)}
\newcommand{\WVstRn}{W^{s,2}_{\mathrm{V}}(\Rn)}
\newcommand{\WlpqRn}{W^1_{p,q}(\Rn)}
\newcommand{\WspCb}{W^{s,p}(\mathbf{Q})}
\newcommand{\usp}{\frac{\n{u(x)-u(y)}^p}{\n{x-y}^{n+sp}}}
\newcommand{\ulsp}{\frac{\n{u_l(x)-u_l(y)}^p}{\n{x-y}^{n+sp}}}
\begin{document}

\title[Compact embeddings of fractional Sobolev spaces on $\Rn$]
{\bf Compact embeddings of some weighted \\ fractional Sobolev spaces on $\Rn$}

\author[Q. Han]
{\sf Qi Han}

\address{Department of Science and Mathematics, Texas A\&M University at San Antonio
\vskip 2pt San Antonio, Texas 78224, USA \hspace{14.4mm}{\sf Email: qhan@tamusa.edu}}

\thanks{{\sf 2010 Mathematics Subject Classification.} 46E35, 35S05, 47A75, 49R05.}
\thanks{{\sf Keywords.} Fractional Sobolev spaces, compact embedding results, fractional Laplacian eigenvalue problems.}

\dedicatory{Dedicated to my little angel Jacquelyn and her mother, my dear wife, Jingbo.}



\begin{abstract}
  In this paper, we study a family of general fractional Sobolev spaces $\MsqpOm$ when $\Om=\Rn$ or $\Om$ is a bounded domain, having a compact, Lipschitz boundary $\Bdy$, in $\Rn$ for $n\geq2$.
  Among other results, some compact embedding results of $\MVsqpRn\hookrightarrow\LqRn$ and $\MVsqpRn\hookrightarrow\LlRn$ for suitable potential functions $V(x)$ are described.
\end{abstract}

\maketitle

\noindent Recently, fractional Sobolev function spaces $\WspRn$, and the associated nonlocal equations, attracted great attentions due to their impressive applications to various disciplines, as briefed for example in the introduction of Di Nezza, Palatucci and Valdinoci \cite{DPV}.
See also Chapter 4 of Demengel and Demengel \cite{DD} or Chapters 1-2 of Molica Bisci, Radulescu and Servadei \cite{MRS} for some concise, self-contained discussions of this matter.

In his classical monograph \cite[Section 5.1.1]{Ma}, Maz'ya defined the general Sobolev function spaces $\WlpqRn$; using the notation $\MqpRn$, the author described some relationships among those spaces and their compact embedding results in \cite{Ha1,Ha2,Ha3,Ha4} with the desire of providing a complement to Lions \cite[Lemma I.1]{Li} in the sense of function space settings.

In the hope of suggesting a little bit more insight regarding the similarity between $\WlpRn$ and $\WspRn$ as observed in \cite{DD,DPV,MRS}, this paper is devoted to the description of some general fractional Sobolev function spaces $\MsqpRn$ when $n\geq2$, their relations and certain compact embedding results of $\MVsqpRn\hookrightarrow\LqRn$ and $\MVsqpRn\hookrightarrow\LlRn$ for suitable potentials $V(x)\geq0$.
As expected, $\MqpRn$ and $\MsqpRn$ share several common properties.

In the sequel, we always assume that $n\geq2$, $1\leq p,q\leq\infty$, and $0<s<1$.

Denote $\Om$ a bounded domain in $\Rn$ having a compact, Lipschitz boundary $\Bdy$, or $\Om=\Rn$.
Define $\MsqpOm$ to be the Banach space as the completion of the set $\ClOm$ if $\Om$ is bounded, or as that of the set $\CclRn$ for $\Om=\Rn$, with respect to the norm
\begin{equation}\label{Eq1}
\nm{u}_{\MsqpOm}:=\nm{u}_{q,\Om}+\br{u}_{s,p,\Om},
\end{equation}
where $\nm{u}^q_{q,\Om}:=\IOm\n{u}^q\dx$ and $\br{u}^p_{s,p,\Om}:=\IIOm\usp\dxy$.

Note $\WspOm=\MspOm$, and if $\Om$ is bounded, $\MsqiOm=\CosOm$ (independent of $q$).

From now on, we shall write both continuous embedding of function spaces and convergence of functions by `` $\to$ '', compact embedding of function spaces by `` $\hookrightarrow$ '', and weak convergence of functions by `` $\rightharpoonup$ ''.
Other notations will be specified when appropriate.

Below, let's embark on the elaboration of our analyses for the spaces $\MsqpOm$.

\vskip 6pt
\noindent{\bf(I.) $\Om$ is a bounded domain with a compact, Lipschitz boundary $\Bdy$.}
\vskip 6pt

In this case, it is trivial to notice $\MsqpOm\to\LlOm$, so that from the fractional {\sf Poincar\'{e}}'s {\sf inequality} (see for instance Bellido and Mora-Corral \cite[Lemma 3.1]{BM}, or Drelichman and Dur\'{a}n \cite[Estimate (1.3)]{DiDr}), one derives $\MsqpOm\to\WspOm$ in view of {\sf Minkowski}'s {\sf inequality}.
This, together with Corollary 4.53 in \cite{DD}, yields the conclusions listed as follows.

\noindent {\bf(1.)} When $sp<n$, then we have, with $p^*_s:=\frac{np}{n-sp}$,
\begin{equation}\label{Eq2}
\left\{\begin{array}{ll}
\MsqpOm=\WspOm&\mathrm{for}\hspace{2mm}1\leq q\leq p^*_s,\\\\
M^{s;q_1,p}(\Om)\to M^{s;q_2,p}(\Om)&\mathrm{for}\hspace{2mm}p^*_s\leq q_2\leq q_1\leq\infty.
\end{array}\right.
\end{equation}
Also, there exists a constant $C_{p,q}>0$, depending on $n,p,q,s,\Om$, such that
\begin{equation}\label{Eq3}
\nm{u}_{\WspOm}\leq C_{p,q}\nm{u}_{\MsqpOm},\hspace{6mm}\forall\hspace{2mm}u\in\MsqpOm\hspace{2mm}\mathrm{and}\hspace{2mm}q\in\br{1,\infty}.
\end{equation}

\noindent {\bf(2.)} When $sp=n$, then we have
\begin{equation}\label{Eq4}
\MsipOm\to\MsqpOm=\WspOm\hspace{2mm}\mathrm{for}\hspace{2mm}1\leq q<\infty,
\end{equation}
and $\MsqpOm$ is not a subset of $\LiOm$ by \cite[Example 4.25]{DD} unless $q=\infty$.
A recent result of Parini and Ruf \cite{PR} provides a fractional {\sf Moser-Trudinger inequality} that says
\begin{equation}\label{Eq5}
\sup_{\br{u}_{s,p,\Rn}\leq1}\IOm e^{\alpha\n{u}^{\frac{n}{n-s}}}\dx\leq C_{\Om}, \hspace{6mm}\forall\hspace{2mm}u\in\WospOm\hspace{2mm}\mathrm{and}\hspace{2mm}\alpha\in[0,\alpha_*).
\end{equation}
Here, $\alpha_*,C_{\Om}>0$ are constants depending on $n,p,s,\Om$ and $\WospOm$, as a proper subspace of $\WspOm$, is the completion of the set $\CclOm$ with respect to the semi-norm $\br{u}_{s,p,\Rn}$, which is equivalent to \eqref{Eq1} in $\Rn$ on this occasion by Brasco, Lindgren and Parini \cite[Lemma 2.4]{BLP}.

\noindent {\bf(3.)} When $sp>n$, then we have, with $\gamma^*_s:=\frac{sp-n}{p}$,
\begin{equation}\label{Eq6}
\MsqpOm=\WspOm\to\CogOm\hspace{2mm}\mathrm{for}\hspace{2mm}0\leq\gamma\leq\gamma^*_s\hspace{2mm}\mathrm{and}\hspace{2mm}1\leq q\leq\infty.
\end{equation}

Further, via \cite[Theorem 4.54]{DD} and \cite[Theorem 7.1]{DPV}, one easily verifies the result below.

\begin{prop}\label{P1}
Assume $n\geq2$, $p,q\in\br{1,\infty}$ and $s\in\pr{0,1}$.
When $sp<n$, then the embedding $\iota:\MsqpOm\to\LrOm$ is continuous if $1\leq r\leq\max\bre{p^*_s,q}$ and compact if $1\leq r<\max\bre{p^*_s,q}$.
When $sp\geq n$, then the embedding $\iota:\MsqpOm\hookrightarrow\LrOm$ is compact if $1\leq r<\infty$.
\end{prop}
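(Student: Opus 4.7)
The plan is to separate into three regimes according to $sp$ versus $n$ and, in each case, reduce the claim for $\MsqpOm$ to the fractional Sobolev embeddings already in hand for $\WspOm$ via \eqref{Eq2}, \eqref{Eq4} and \eqref{Eq6}. When $sp<n$ and $1\leq q\leq p^*_s$, identity \eqref{Eq2} gives $\MsqpOm=\WspOm$, so the conclusion is exactly the fractional Rellich--Kondrachov theorem (\cite[Theorem 4.54]{DD}, \cite[Theorem 7.1]{DPV}): continuous into $\LrOm$ for $1\leq r\leq p^*_s$ and compact for $1\leq r<p^*_s$, and here $\max\bre{p^*_s,q}=p^*_s$.

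For $sp<n$ and $p^*_s<q\leq\infty$, so that $\max\bre{p^*_s,q}=q$, the definition \eqref{Eq1} gives the trivial embedding $\MsqpOm\to L^q(\Om)$, whereas \eqref{Eq2}, \eqref{Eq3} together with the usual Sobolev embedding provide $\MsqpOm\to\WspOm\to L^{p^*_s}(\Om)$; since $\Om$ is bounded, H\"{o}lder interpolation in $L^r$ extends these to a continuous embedding $\MsqpOm\to\LrOm$ for every $1\leq r\leq q$. To upgrade to compactness for $1\leq r<q$, I would take a bounded sequence $\bre{u_k}\subset\MsqpOm$, extract via the Rellich compactness $\WspOm\hookrightarrow\LlOm$ a subsequence convergent in $\LlOm$, and then invoke
\[
\nm{u_k-u_j}_{r,\Om}\leq\nm{u_k-u_j}_{1,\Om}^{\theta}\,\nm{u_k-u_j}_{q,\Om}^{1-\theta},\qquad \tfrac{1}{r}=\theta+\tfrac{1-\theta}{q},
\]
which is valid with $\theta\in(0,1]$, i.e., precisely for $r\in[1,q)$, using the uniform $L^q(\Om)$-bound built into $\nm{\cdot}_{\MsqpOm}$.

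When $sp=n$, the range $1\leq q<\infty$ reduces by \eqref{Eq4} to the known compactness $\WspOm\hookrightarrow\LrOm$ for every $1\leq r<\infty$; the case $q=\infty$ is handled directly since $\MsqiOm=\CosOm$, whence Arzel\`{a}--Ascoli delivers a compact embedding into $C^0(\overline{\Om})$ and hence into each $\LrOm$. When $sp>n$, \eqref{Eq6} gives $\MsqpOm=\WspOm\to\CogOm$ for every $\gamma\in[0,\gamma^*_s]$, and Arzel\`{a}--Ascoli again yields the compact embedding into $C^0(\overline{\Om})$ and thus into $\LrOm$ for all $1\leq r<\infty$. The main technical point I expect to have to justify is the compactness step in the regime $sp<n$, $p^*_s<q\leq\infty$: Rellich only provides compactness in $L^t(\Om)$ for $t<p^*_s$, so one must genuinely exploit both the $L^q(\Om)$-control encoded in $\nm{\cdot}_{\MsqpOm}$ and the boundedness of $\Om$, through the interpolation inequality displayed above, to bridge across $p^*_s$ and reach the full range $1\leq r<\max\bre{p^*_s,q}$.
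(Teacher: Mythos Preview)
Your argument is correct and follows the same reduction strategy as the paper: use \eqref{Eq2}--\eqref{Eq6} to funnel everything through $\WspOm$ and the classical fractional Rellich--Kondrachov/Morrey embeddings. The paper's own proof is terser: it disposes of $sp\geq n$ by a one-line appeal to \eqref{Eq4} and \eqref{Eq6}, and for $sp<n$ it extends $u$ to a cube via \eqref{Eq3} and the Lipschitz extension property and then invokes \cite[Theorem 7.1]{DPV} directly, without spelling out the subcase $q>p^*_s$.

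Your treatment of that subcase is the genuine addition. The paper's extension-to-a-cube argument, read literally, only yields compactness into $\LrOm$ for $r<p^*_s$; getting up to $r<q$ really does require the $L^1$--$L^q$ interpolation you display, using the built-in $L^q$ bound from $\nm{\cdot}_{\MsqpOm}$. So your version makes explicit a step the paper leaves to the reader.

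One small slip: in the $sp=n$, $q=\infty$ paragraph you invoke $\MsqiOm=\CosOm$, but that identity is the case $p=\infty$, not $q=\infty$. You do not actually need a separate argument there: \eqref{Eq4} already contains $\MsipOm\to\WspOm$, so the $q=\infty$ case is covered by the same Rellich reduction you use for $1\leq q<\infty$.
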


\begin{proof}
Via \eqref{Eq4} and \eqref{Eq6}, we only need to consider the case $sp<n$; this follows almost identically from the proof of \cite[Theorem 7.1]{DPV} seeing \eqref{Eq2}-\eqref{Eq3}.
As a matter of fact, for any cube $\mathbf{Q}$ containing $\Om$, one has for each $u\in\MsqpOm$, recalling $\Om$ is an extension domain,
\begin{equation}
\nm{\tilde{u}}_{\WspCb}\leq\nm{\tilde{u}}_{\WspRn}\leq C'_{\Om}\nm{u}_{\WspOm}\leq C'_{p,q}\nm{u}_{\MsqpOm}.\nonumber
\end{equation}
Here, $\tilde{u}$ denotes the extension of $u$ to $\WspRn$, and $C'_{\Om},C'_{p,q}>0$ are constants depending on $n,p,q,s,\Om$.
Therefore, we can follow \cite[Theorem 7.1]{DPV} to complete the proof.
\end{proof}

Recall for $sp>n$, one actually has
\begin{equation}\label{Eq7}
\MsqpOm\hookrightarrow\CogOm\hspace{2mm}\mathrm{for}\hspace{2mm}0\leq\gamma<\gamma^*_s\hspace{2mm}\mathrm{and}\hspace{2mm}1\leq q\leq\infty.
\end{equation}

\vskip 6pt
\noindent{\bf(II.) $\Om=\Rn$.}
\vskip 6pt

First, note that $\MsqpRn$ represents the Banach space as the completion of the set $\CclRn$ with respect to the norm \eqref{Eq1}.
Now, following Lieb and Loss \cite[Sections 3.2 and 4.3]{LL}, a function $u\in\LlocRn$ is said to {\sl vanish at infinity} provided that $\mathfrak{L}\pr{\bre{x\in\Rn:\n{u(x)}\geq c}}<\infty$ for each positive constant $c>0$, with $\mathfrak{L}$ being the Lebesgue measure.

\noindent {\bf(1.)} When $sp<n$, denote by $\DspRn$ the space of functions $u\in\LlocRn$, where $u$ vanish at infinity and $\br{u}_{s,p,\Rn}<\infty$.
Then, a careful check of Lemma 6.3 and Theorem 6.5 in \cite{DPV} implies $u\in\LpcRn$, and one has the fractional {\sf Sobolev inequality}, saying that
\begin{equation}\label{Eq8}
\Bigl(\IRn\n{u}^{p^*_s}\dx\Bigr)^{\frac{n-sp}{n}}\leq C_1\IIRn\usp\dxy,\hspace{6mm}\forall\hspace{2mm}u\in\DspRn.
\end{equation}
Here, $C_1>0$ is an absolute constant depending on $n,p,s$ with $1\leq p<\infty$.

This leads to $\DspRn=\MspcRn${\footnote {Equivalently, $\DspRn$ is the completion of the set $\CclRn$ with respect to $\br{\cdot}_{s,p,\Rn}$.}}, and one furthermore has
\begin{equation}\label{Eq9}
M^{s;q_1,p}(\Rn)\to M^{s;q_2,p}(\Rn)\hspace{2mm}\mathrm{for}\hspace{2mm}
\begin{array}{ll}
\mathrm{either}&1\leq q_1\leq q_2\leq p^*_s,\medskip\\
\mathrm{or}&p^*_s\leq q_2\leq q_1<\infty
\end{array}
\end{equation}
using {\sf Chebyshev}'s {\sf inequality} and the standard interpolation inequality.
Notice if one would like to have $q=\infty$ included, then $u$ needs to be compactly supported.

\noindent {\bf(2.)} When $sp=n$, then one has the fractional {\sf Gagliardo-Nirenberg inequality} (see Nguyen and Squassina \cite[Lemma 2.1]{NS}) that says
\begin{equation}\label{Eq10}
\nm{u}_{r,\Rn}\leq C_2\nm{u}^{\theta}_{q,\Rn}\br{u}_{s,p,\Rn}^{1-\theta},\hspace{6mm}\forall\hspace{2mm}u\in\MsqpRn.
\end{equation}
Here, we only cited a tailored version for our purpose with $1\leq q\leq r<\infty$ and $\theta:=\frac{q}{r}\in(0,1]$, and $C_2>0$ is an absolute constant depending on $n,p,q,r,s$.
Therefore, we have
\begin{equation}\label{Eq11}
M^{s;q_1,p}(\Rn)\to M^{s;q_2,p}(\Rn)\hspace{2mm}\mathrm{for}\hspace{2mm}1\leq q_1\leq q_2<\infty.
\end{equation}
Notice here $q=\infty$ doesn't contribute to the embedding result \eqref{Eq11}.

\noindent {\bf(3.)} When $sp>n$, a careful reading of Theorem 8.2 in \cite{DPV}, especially their Estimates (8.3) and (8.9), suggests $\MsqpRn\to\CogcRn$ so that $\MsqpRn\to\LiRn$.
Thus, one has
\begin{equation}\label{Eq12}
M^{s;q_1,p}(\Rn)\to M^{s;q_2,p}(\Rn)\hspace{2mm}\mathrm{for}\hspace{2mm}1\leq q_1\leq q_2\leq\infty.
\end{equation}
Notice $\MsqiRn\to\CosRn$ that is consistent with the observation shown at \cite[Page 565]{DPV}, while $\MsipRn$ is the largest possible space for fixed $p,s$ in \eqref{Eq12}.

\begin{prop}\label{P2}
Let $n\geq2$, $p\in\br{1,\infty}$, $q\in[1,\infty)$ and $s\in\pr{0,1}$.
If $sp<n$, then the embedding $\iota:\MsqpRn\to\LrRn$ is continuous for $\min\bre{p^*_s,q}\leq r\leq\max\bre{p^*_s,q}$.
If $sp=n$ or $sp>n$, then the embedding $\iota:\MsqpRn\to\LrRn$ is continuous for $q\leq r<\infty$ or $q\leq r\leq\infty$.
\end{prop}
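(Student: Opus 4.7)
The plan is to deduce each of the three cases from the endpoint material already developed in part (II) of the excerpt, combined with elementary $L^p$-interpolation; there is really no deep ingredient beyond this.

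First, for $sp<n$, I would observe that every $u\in\MsqpRn$ satisfies $\nm{u}_{q,\Rn}\leq\nm{u}_{\MsqpRn}$ directly from the norm \eqref{Eq1}, and also $\nm{u}_{p^*_s,\Rn}\leq C_1^{1/p}\br{u}_{s,p,\Rn}\leq C_1^{1/p}\nm{u}_{\MsqpRn}$ by the fractional Sobolev inequality \eqref{Eq8}. A standard interpolation between the exponents $q$ and $p^*_s$ --- in one direction if $q\leq p^*_s$ and in the other if $q\geq p^*_s$ --- then gives a continuous inclusion into $\LrRn$ for every $r$ lying between them, which is exactly the announced range $\min\{p^*_s,q\}\leq r\leq\max\{p^*_s,q\}$.

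In the case $sp=n$, I would simply invoke the fractional Gagliardo-Nirenberg inequality \eqref{Eq10}, which already covers the full range $q\leq r<\infty$ with $\theta=q/r$ and reads
\begin{equation*}
\nm{u}_{r,\Rn}\leq C_2\nm{u}^{\theta}_{q,\Rn}\br{u}_{s,p,\Rn}^{1-\theta}\leq C_2\nm{u}_{\MsqpRn};
\end{equation*}
no further work is needed. Finally, when $sp>n$, the embedding $\MsqpRn\to\CogcRn\to\LiRn$ recorded in \eqref{Eq12} supplies the $r=\infty$ endpoint, while $\nm{u}_{q,\Rn}\leq\nm{u}_{\MsqpRn}$ gives the $r=q$ endpoint. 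One more $L^p$-interpolation between $L^q$ and $L^{\infty}$ then produces the desired continuous embedding into $\LrRn$ for every $r\in[q,\infty]$.

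The whole argument is essentially bookkeeping against \eqref{Eq8}, \eqref{Eq10} and the $L^{\infty}$-bound behind \eqref{Eq12}, so I do not anticipate a real obstacle. The only point that deserves a moment of care is the first case, where one must interpolate with the correct ordering of the endpoints $q$ and $p^*_s$ in order to recover the full interval $[\min\{p^*_s,q\},\max\{p^*_s,q\}]$ rather than a proper subinterval of it, and to notice that the argument does not extend to $r=\infty$ here (which is consistent with the exclusion $q<\infty$ in the hypothesis).
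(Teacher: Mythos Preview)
Your proposal is correct and matches the paper's own approach: Proposition~\ref{P2} is stated in the paper without an explicit proof, as it is a direct summary of the endpoint estimates \eqref{Eq8}, \eqref{Eq10} and the $L^\infty$-bound preceding \eqref{Eq12}, combined with standard $L^r$-interpolation between $q$ and $p^*_s$ (respectively $q$ and $\infty$). This is precisely what you outline, so there is nothing to add.
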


\vskip 6pt
\noindent{\bf(III.) A fractional Moser-Trudinger type inequality on $\MsqpRn$ when $sp=n$.}
\vskip 6pt

In this section, we employ the preceding result in \cite{PR} to characterize a possible extension of \eqref{Eq5} to $\Rn$ concerning functions $u\in\MsqpRn$ when $sp=n$.
The proof presented here follows basically from Ruf \cite{Ru} and do \'{O} \cite{dO} (see also Li and Ruf \cite{LR}, and Iula \cite{Iu} among many other important contributions on $\Rn$, which cannot be exhaustively listed here).

We now recall some essential facts about {\sf Schwarz symmetrization} by Berestycki and Lions \cite[Appendix III]{BL}, Lieb and Loss \cite[Section 3.3]{LL}, and Beckner \cite[Theorem 3]{Be}.
Let $f(x)$ be a Borel measurable function vanishing at infinity in the sense of Lieb and Loss, and let $f^*(x)(\geq0)$ be the Schwarz symmetrization or spherical rearrangement of $f(x)$ such that $\mathfrak{L}\pr{\bre{x\in\Rn:\n{f(x)}\geq c}}=\mathfrak{L}\pr{\bre{x\in\Rn:f^*(x)\geq c}}$ for all positive constants $c>0$.
Notice $f^*(x)$ is unique, radial, decreasing in $\n{x}$, and lower semi-continuous (so measurable).
Also, we have
\begin{equation}\label{Eq13}
\IRn\Phi(f^*)\dx=\IRn\Phi(\n{f})\dx
\end{equation}
for all continuous functions $\Phi$ with $\Phi(\n{f})$ integrable; so, $\nm{f^*}_{q,\Rn}=\nm{f}_{q,\Rn}$ when $f\in\LqRn$ for $1\leq q\leq\infty$.
Besides, one has
\begin{equation}\label{Eq14}
\IIRn\frac{\n{f^*(x)-f^*(y)}^p}{\n{x-y}^{n+sp}}\dxy\leq\IIRn\frac{\n{f(x)-f(y)}^p}{\n{x-y}^{n+sp}}\dxy.
\end{equation}
For $f^*(x)\in\LqRn$, {\sf Berestycki-Lions}' {\sf radial lemma} \cite[Lemma A.IV]{BL} leads to
\begin{equation}\label{Eq15}
f^*(x)\leq\n{x}^{-\frac{n}{q}}\Bigl(\frac{n}{\on}\Bigr)^{\frac{1}{q}}\nm{f^*}_{q,\Rn}
\end{equation}
for all $x\neq0$, where $\on$ presents the surface area of the unit sphere in $\Rn$.


We next prove a result following \cite[Lemma 1]{dO} and \cite[Proposition 2.1]{Ru} via the $\MsqpRn$-norm, which may be viewed as a fractional {\sf Moser-Trudinger inequality} on $\MsqpRn$.

\begin{thm}\label{T3}
Assume $n\geq2$, $1\leq q<\infty$, $0<s<1$, $sp=n$, $0\leq\alpha<\alpha_*$, and $u\in\MsqpRn$.
Then, there is an absolute constant $C(\alpha,q,s)>0$ depending on $\alpha,n,q,s$ such that
\begin{equation}\label{Eq16}
\sup_{\nm{u}_{\MsqpRn}\leq1}\IRn\Psi_{\alpha,q,s}(\n{u})\dx\leq C(\alpha,q,s).
\end{equation}
Here, for $v\geq0$ and the least positive integer $\beta_0$ with $\frac{\beta_0n}{n-s}\geq q\geq1$, $\Psi_{\alpha,q,s}(v):=\sum\limits_{\ell=\beta_0}^{\infty}\frac{\alpha^{\ell}}{\ell!}v^{\frac{\ell n}{n-s}}$.
\end{thm}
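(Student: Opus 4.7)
The plan is to adapt the bounded-domain Moser--Trudinger inequality \eqref{Eq5} to the full space via Schwarz symmetrization and a compactly supported Lipschitz truncation, in the spirit of do \'O and Ruf. Step one: using \eqref{Eq13}--\eqref{Eq14}, the symmetrization $u^*$ preserves $\nm{u}_{q,\Rn}$, does not increase $\br{u}_{s,p,\Rn}$, and leaves $\IRn \Psi_{\alpha,q,s}(\n{u})\dx$ unchanged. Hence I may replace $u$ by $u^*$ and assume from the outset that $u \geq 0$ is radial, non-increasing, with $\nm{u}_{\MsqpRn} \leq 1$; after proving the estimate for $u \in \CclRn$ the general case follows by Fatou's lemma along a suitable approximating sequence.

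Step two is to fix an auxiliary radius $R > 0$ (for definiteness $R := 1$) and split the integral into its parts over $\BR$ and $\BRc$. For the exterior piece, the radial lemma \eqref{Eq15} yields $u(x) \leq C_{n,q}\n{x}^{-n/q}$ with $C_{n,q} := (n/\on)^{1/q}$. For every $\ell \geq \beta_0$ one has $a_\ell := \frac{\ell n}{n-s} \geq q$, so factoring $u^{a_\ell} = u^q \cdot u^{a_\ell-q}$ and bounding the second factor pointwise on $\BRc$ gives
\begin{equation*}
\IBRc u^{a_\ell}\dx \leq C_{n,q}^{a_\ell-q}\,R^{-n(a_\ell-q)/q}\nm{u}_{q,\Rn}^{q} \leq C_{n,q}^{a_\ell-q}\,R^{-n(a_\ell-q)/q}.
\end{equation*}
Since $a_\ell$ is linear in $\ell$, the factorial in $\alpha^\ell/\ell!$ dominates and $\sum_{\ell\geq\beta_0}\frac{\alpha^\ell}{\ell!}\IBRc u^{a_\ell}\dx$ sums to a constant depending only on $\alpha, n, q, s$.

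Step three handles the interior. Set $w(x) := (u(x) - u(R))^{+}$, where $u(R)$ denotes the common value of $u$ on $\partial\BR$; by radial decrease $w$ is supported in $\overline{\BR}$ and, after mollification, lies in $\WospBR$. The 1-Lipschitz property of $t\mapsto t^{+}$ yields $\n{w(x)-w(y)} \leq \n{u(x)-u(y)}$, whence $\br{w}_{s,p,\Rn} \leq \br{u}_{s,p,\Rn} \leq 1$. On $\BR$ one has $u = u(R) + w$, and with $p_0 := \frac{n}{n-s} > 1$ the elementary convex inequality $(a+b)^{p_0} \leq (1+\epsilon)b^{p_0} + C_\epsilon a^{p_0}$, with $\epsilon>0$ chosen so that $\alpha(1+\epsilon) < \alpha_*$, gives
\begin{equation*}
e^{\alpha u(x)^{p_0}} \leq e^{\alpha C_\epsilon u(R)^{p_0}}\cdot e^{\alpha(1+\epsilon)w(x)^{p_0}}, \qquad x \in \BR.
\end{equation*}
Since $u(R) \leq C_{n,q}R^{-n/q}$ is a fixed constant, the first factor is absorbed, while $\IBR e^{\alpha(1+\epsilon)w(x)^{p_0}}\dx$ is bounded by \eqref{Eq5} applied to $w$. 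Using $\Psi_{\alpha,q,s}(u) \leq e^{\alpha u^{p_0}}$ pointwise (the omitted terms $\ell<\beta_0$ are nonnegative) closes the interior bound.

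The main obstacle is this interior estimate. Inequality \eqref{Eq5} requires a member of $\WospBR$ whose Gagliardo semi-norm is controlled, yet $u$ itself is not compactly supported. Three ingredients cooperate to bridge this gap: the 1-Lipschitz truncation $(\cdot)^{+}$ preserves the semi-norm bound on $w$; the radial lemma \eqref{Eq15} makes the shift $u(R)$ a quantitative constant rather than a free parameter; and the convex slack $(1+\epsilon)$ leaves just enough room below the threshold $\alpha_*$ to apply \eqref{Eq5} to $\alpha(1+\epsilon) w^{p_0}$. Combining the exterior and interior bounds yields \eqref{Eq16} with a constant depending only on $\alpha, n, q, s$.
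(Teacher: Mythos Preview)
Your argument is correct and follows the same overall architecture as the paper: symmetrize via \eqref{Eq13}--\eqref{Eq15}, split at a fixed radius, control the exterior by the radial lemma together with $\nm{u}_{q,\Rn}\leq1$, and control the interior by passing to the truncation $w=(u-u(R))^{+}\in\WospBR$ and invoking \eqref{Eq5}.

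The genuine difference is in how the interior piece is closed. The paper does not use your convex slack $(a+b)^{p_0}\leq(1+\epsilon)b^{p_0}+C_\epsilon a^{p_0}$. Instead it keeps the exponent at exactly $\alpha$ and absorbs the shift $u_0=u(R_0)$ by renormalizing $\tilde u:=w\,(1+u_0^{n/s})^{(n-s)/n}$, and then uses the \emph{full} constraint $\br{u}_{s,p,\Rn}\leq 1-\nm{u}_{q,\Rn}$ (not merely $\br{u}_{s,p,\Rn}\leq1$) to show $\br{\tilde u}_{s,p,\Rn}\leq1$. This forces a one-variable calculus lemma and a specific choice of large $R_0$ so that $\sigma=(n/\on R_0^n)^{n/(qs)}$ is small enough. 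Your route avoids all of that: because the statement only asks for $\alpha<\alpha_*$, you can spend the gap on the $(1+\epsilon)$ factor, apply \eqref{Eq5} at $\alpha(1+\epsilon)<\alpha_*$ directly to $w$ with $\br{w}_{s,p,\Rn}\leq1$, and take $R=1$. The paper's renormalization is the machinery one would want if \eqref{Eq5} were known at $\alpha=\alpha_*$; since it is stated only for $\alpha<\alpha_*$, the extra work buys nothing here, and your shortcut is legitimate and cleaner. Your exterior estimate (factoring $u^{a_\ell}=u^q\cdot u^{a_\ell-q}$ and summing) is likewise a tidy variant of the paper's term-by-term integration.
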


\begin{proof}
To save notation, assume without loss of generality $u\geq0$.
Via $u^*$, and seeing \eqref{Eq13}, \eqref{Eq14} and $\Psi_{\alpha,q,s}(u^*)\geq0$, we can simply consider $u=u^*$ and decompose
\begin{equation}
\IRn\Psi_{\alpha,q,s}(u)\dx=\int_{\BRo}\Psi_{\alpha,q,s}(u)\dx+\int_{\BRco}\Psi_{\alpha,q,s}(u)\dx.\nonumber
\end{equation}
Here and hereafter, $\BR$ is the ball of radius $R$ in $\Rn$ centered at the origin and $\BRc:=\Rn\setminus\BR$, while $R_0>0$ is a sufficiently large absolute constant to be determined later.

To estimate the integral over $\BRo$, write $v(x):=\max\bre{u(\n{x})-u_0,0}$ for $u_0:=u(R_0)>0$, a constant.
Then, $v\equiv0$ on $\BRco$ and $v\in\WospBRo$, since by Lemma 2.4 of \cite{BLP}
\begin{equation}\label{Eq17}
\begin{split}
&\frac{\Vn}{\pr{36R_0}^n}\int_{\BRo}v^p(x)\dx\leq\int_{\BRo}\int_{\mathbf{B}_{R_0}(x_0)}\frac{\n{v(x)-v(y)}^p}{\n{x-y}^{2n}}\dxy\\
\leq&\int_{\BRo}\int_{\BRo}\frac{\n{v(x)-v(y)}^p}{\n{x-y}^{2n}}\dxy+2\int_{\BRo}\int_{\BRco}\frac{\n{v(x)-v(y)}^p}{\n{x-y}^{2n}}\dxy\\
=&\int_{\BRo}\int_{\BRo}\frac{\n{u(\n{x})-u(\n{y})}^p}{\n{x-y}^{2n}}\dxy+2\int_{\BRo}\int_{\BRco}\frac{\n{u(\n{x})-u_0}^p}{\n{x-y}^{2n}}\dxy
\end{split}
\end{equation}
follows; seeing $u(\n{y})\leq u_0$ when $\n{y}\geq R_0$, one has $\frac{\Vn}{\pr{36R_0}^n}\nm{v}^p_{p,\BRo}\leq\br{v}^p_{s,p,\Rn}\leq\br{u}^p_{s,p,\Rn}\leq1$.
Here, $\Vn$ presents the volume of the unit ball in $\Rn$, and $\mathbf{B}_{R_0}(x_0)$ denotes a ball of radius $R_0$ in $\BRco$ centered at some point $x_0$ with $\n{x_0}=4R_0$.

Notice \cite[Estimate (5)]{dO} yields $\pr{1+t}^{\frac{n}{n-s}}\leq t^{\frac{n}{n-s}}+K_1t^{\frac{s}{n-s}}+1$ for all $t\in\pr{0,\infty}$ with $K_1>0$ an absolute constant.
Actually, for $f(t):=\frac{\pr{1+t}^{\frac{n}{n-s}}-t^{\frac{n}{n-s}}-1}{t^{\frac{s}{n-s}}}$ that is continuous on $\pr{0,\infty}$, one sees $\lim\limits_{t\to0}f(t)=0$ as $n\geq2>2s$, while using $\tilde{t}:=\frac{1}{t}$ one sees $\lim\limits_{t\to\infty}f(t)=\frac{n}{n-s}$.

Next, for $t=\frac{v(x)}{u_0}$, it follows that $u^{\frac{n}{n-s}}\leq\bre{v+u_0}^{\frac{n}{n-s}}\leq v^{\frac{n}{n-s}}+K_1v^{\frac{s}{n-s}}u_0+u_0^{\frac{n}{n-s}}$.
Apply {\sf Young}'s {\sf inequality} to $K_1v^{\frac{s}{n-s}}u_0$ to observe, for an absolute constant $K_2>0$,
\begin{equation}\label{Eq18}
u^{\frac{n}{n-s}}\leq v^{\frac{n}{n-s}}\pr{1+u^{\frac{n}{s}}_0}+K_2+u_0^{\frac{n}{n-s}}.
\end{equation}
Define $\tilde{u}:=v\pr{1+u^{\frac{n}{s}}_0}^{\frac{n-s}{n}}\in\WospBRo$ and notice $\br{\tilde{u}}_{s,p,\Rn}\leq\pr{1+u^{\frac{n}{s}}_0}^{\frac{n-s}{n}}\br{v}_{s,p,\Rn}$, so that by the discussions after \eqref{Eq17} it leads to $\br{\tilde{u}}^p_{s,p,\Rn}\leq\pr{1+u^{\frac{n}{s}}_0}^{\frac{n-s}{s}}\pr{1-\nm{u}_{q,\Rn}}^{\frac{n}{s}}$.

From \eqref{Eq15}, $u^{\frac{n}{s}}_0\leq\sigma\nm{u}^{\frac{n}{s}}_{q,\Rn}$ for $\sigma:=\bigl(\frac{n}{\on R^n_0}\bigr)^{\frac{n}{qs}}$.
Write $g(t):=\pr{1+\sigma t^{\frac{n}{s}}}^{\frac{n-s}{s}}\pr{1-t}^{\frac{n}{s}}-1$ and take $g'(t)=\frac{n}{s}\pr{1+\sigma t^{\frac{n}{s}}}^{\frac{n-2s}{s}}\pr{1-t}^{\frac{n-s}{s}}\bigl(\frac{n-s}{s}\sigma t^{\frac{n-s}{s}}-\frac{n}{s}\sigma t^{\frac{n}{s}}-1\bigr)$, with $g(0)=0$ and $g(1)=-1$.
Define $h(t):=\frac{n-s}{s}\sigma t^{\frac{n-s}{s}}-\frac{n}{s}\sigma t^{\frac{n}{s}}-1$ with $h'(t_0)=0$ for $t_0:=\bigl(\frac{n-s}{n}\bigr)^2$.
So, when $R_0$ is so large that $\sigma\leq\bigl(\frac{n}{n-s}\bigr)^{\frac{2n-s}{s}}$, then one sees $h(t)\leq0$ and hence $g'(t)\leq0$ on $\br{0,1}$.
That is, $g(t)\leq0$ for $0\leq t\leq1$, which clearly implies $\br{\tilde{u}}_{s,p,\Rn}\leq1$ provided $R^n_0\geq\frac{n}{\on}\bigl(\frac{n-s}{n}\bigr)^{\frac{q\pr{2n-s}}{n}}$.

As a result, one can combine this with \eqref{Eq5} and \eqref{Eq18} to deduce
\begin{equation}
\int_{\BRo}\Psi_{\alpha,q,s}(u)\dx\leq\int_{\BRo}e^{\alpha u^{\frac{n}{n-s}}}
\leq e^{\alpha\bigl(K_2+u_0^{\frac{n}{n-s}}\bigr)}\int_{\BRo}e^{\alpha\tilde{u}^{\frac{n}{n-s}}}\dx\leq C_1(\alpha,q,s).\nonumber
\end{equation}
Here, $C_1(\alpha,q,s)>0$ is an absolute constant depending on $\alpha,n,q,s$.

To estimate the integral over $\BRco$, one applies \eqref{Eq15} with $\nm{u}_{q,\Rn}\leq1$ to observe
\begin{equation}
\int_{\BRco}u^{\frac{\ell n}{n-s}}\dx\leq\on\Bigl(\frac{n}{\on}\Bigr)^{\frac{\ell n}{q\pr{n-s}}}\int^{\infty}_{R_0}t^{-\frac{\ell n^2}{q\pr{n-s}}+n-1}\dt.\nonumber
\end{equation}
For $\frac{\beta_0n}{n-s}\geq q$, one employs Proposition \ref{P2} to the starting term in $\Psi_{\alpha,q,s}(u)$ and thus
\begin{equation}
\sum_{\ell=\beta_0}^{\infty}\frac{\alpha^{\ell}}{\ell!}\int_{\BRco}u^{\frac{\ell n}{n-s}}\dx\leq\frac{\alpha^{\beta_0}}{\beta_0!}\int_{\BRco}u^{\frac{\beta_0n}{n-s}}\dx
+\frac{q\pr{n-s}}{n^2}R^n_0\on e^{\alpha\sigma^{\frac{s}{n-s}}}\leq C_2(\alpha,q,s).\nonumber
\end{equation}
Here, $C_2(\alpha,q,s)>0$ is an absolute constant depending on $\alpha,n,q,s$.

Accordingly, \eqref{Eq16} follows for $C(\alpha,q,s):=\max\bre{C_1(\alpha,q,s),C_2(\alpha,q,s)}>0$.
\end{proof}

\vskip 6pt
\noindent{\bf(IV.) Some compact embedding results regarding $\MVsqpRn\hookrightarrow\LrKRn$.}
\vskip 6pt

Let $V(x)>0$ be a Lebesgue measurable function in $\Rn$ such that $\inf\limits_{\mathbf{D}}V(x)\geq V_{\mathbf{D}}>0$ for all compact subsets $\mathbf{D}\Subset\Rn$.
Take $n\geq2$, $p\in\br{1,\infty}$, $q\in[1,\infty)$, and $s\in\pr{0,1}$.
When $sp<n$, we designate $\MVsqpRn$ to be the Banach space as the completion of the set $\CclRn$ with respect to the norm $\nm{u}_{\MVsqpRn}:=\nm{u}_{\LqVRn}+\br{u}_{s,p,\Rn}$ where $\nm{u}^q_{\LqVRn}:=\IRn\n{u}^qV\dx$, so that $\MVsqpRn$ is a subspace of $\DspRn$.
When $sp\geq n$, we further require $\inf\limits_{\Rn}V(x)\geq V_0>0$ and define $\MVsqpRn$ similarly, so that $\MVsqpRn$ is a subspace of $\MsqpRn$.

One recalls that some closely related results on $\WlpRn$ may be found in Chiappinelli \cite[Theorem 1]{Ch}, Schneider \cite[Theorem 2.3]{Sc}, and Bonheure and Van Schaftingen \cite[Theorem 4]{BVS}.
Yet, it seems that the results presented here are not available even on $\WspRn$.

\noindent {\bf(1.)} When $sp<n$, we can prove some compact embedding results listed below.

\begin{thm}\label{T4}
Assume $n\geq2$, $1\leq p\leq\infty$, $0<s<1$ with $sp<n$, and $1\leq q\leq r<p^*_s$.
Let $K(x),V(x)>0:\Rn\to\R$ be two Lebesgue measurable functions such that $K(x)\in\LtOm$ for some $t\in\bigl(\frac{p^*_s}{p^*_s-r},\infty\bigr]$ on each set $\Om$ of $\Rn$ with $\mathfrak{L}\pr{\Om}<\infty$ and $K(x)V^{-\tau}(x)$ vanishes at infinity for $\tau:=\frac{p^*_s-r}{p^*_s-q}$.
Then, the embedding $\MVsqpRn\hookrightarrow\LrKRn$ is compact.
\end{thm}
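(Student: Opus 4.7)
Let $\bre{u_k}$ be a bounded sequence in $\MVsqpRn$ with $\nm{u_k}_{\MVsqpRn}\leq M$; the goal is to extract a subsequence converging in $\LrKRn$. The strategy is to split $\Rn=\BR\cup\BRc$ for a free radius $R$ and to handle each piece by a different mechanism. Before splitting, note two preliminary facts: the fractional Sobolev inequality \eqref{Eq8} yields $\nm{u_k}_{\LpcRn}\leq CM$ uniformly, and the algebraic identity $q\tau+p^*_s\pr{1-\tau}=r$, which follows directly from $\tau=\frac{p^*_s-r}{p^*_s-q}$, gives the pointwise factorization
\begin{equation*}
\n{u}^rK=\pr{\n{u}^qV}^{\tau}\pr{\n{u}^{p^*_s}}^{1-\tau}\cdot KV^{-\tau}
\end{equation*}
that will drive every tail estimate. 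Let $t'=\frac{t}{t-1}$ denote the H\"{o}lder conjugate of $t$; the hypothesis $t>\frac{p^*_s}{p^*_s-r}$ is equivalent to $rt'<p^*_s$.

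For the ball part, since $V(x)\geq V_{\BR}>0$ on $\BR$, each restriction $u_k|_{\BR}$ is bounded in $\MsqpBR$, so Proposition \ref{P1} supplies the compact embedding $\MsqpBR\hookrightarrow L^{rt'}(\BR)$. A diagonal argument over $R=1,2,\ldots$ then produces a subsequence (still written $u_k$) that converges a.e.\ and in $L^{rt'}(\BR)$ for every $R$, to some $u$. For each fixed $R$, H\"{o}lder gives
\begin{equation*}
\IBR\n{u_k-u}^rK\dx\leq\nm{K}_{t,\BR}\,\nm{u_k-u}_{rt',\BR}^r\longrightarrow0\quad\text{as }k\to\infty.
\end{equation*}

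For the tail, fix $\epsilon>0$ and apply H\"{o}lder with exponents $\tfrac{1}{\tau},\tfrac{1}{1-\tau}$ to the displayed factorization: on any set $E$ where $KV^{-\tau}\leq\delta$,
\begin{equation*}
\int_E\n{u_k}^rK\dx\leq\delta\,\nm{u_k}_{\LqVRn}^{q\tau}\nm{u_k}_{\LpcRn}^{p^*_s\pr{1-\tau}}\leq\delta\,C'M^r,
\end{equation*}
so $\delta$ can be chosen small enough to make this $<\epsilon$ uniformly in $k$. The complementary level set $A_{\delta}:=\bre{KV^{-\tau}>\delta}$ has finite Lebesgue measure by the vanishing-at-infinity hypothesis, hence $\mathfrak{L}\pr{\BRc\cap A_{\delta}}\to0$ as $R\to\infty$; by absolute continuity of integration, $\nm{K}_{t,\BRc\cap A_{\delta}}\to0$. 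H\"{o}lder combined with $rt'\leq p^*_s$ (interpolating $\nm{u_k}_{rt',\,\BRc\cap A_\delta}$ against $\nm{u_k}_{\LpcRn}$ on the finite-measure set $A_\delta$) then makes $\int_{\BRc\cap A_{\delta}}\n{u_k}^rK\dx<\epsilon$ for all $k$ once $R$ is large. Applying Fatou to the a.e.\ limit $u$ transfers the same tail bound to $u$, so $u\in\LrKRn$, and assembling the three pieces gives $\nm{u_k-u}_{\LrKRn}\to0$.

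The main obstacle I anticipate is the quantifier ordering in the tail argument: $\delta$ must be chosen first in terms of $\pr{M,\epsilon}$ to control the ``small-weight'' contribution, and only then $R$ in terms of $\pr{M,\delta,\epsilon}$ so that the ``large-weight'' region $A_{\delta}\cap\BRc$ carries an arbitrarily small $L^t$-mass of $K$; both bounds must stay uniform in $k$, which is precisely why the interpolation identity $q\tau+p^*_s\pr{1-\tau}=r$ and the spare room $rt'<p^*_s$ are jointly required. The definition $\tau=\frac{p^*_s-r}{p^*_s-q}$ is the unique exponent that lets these two requirements close simultaneously.
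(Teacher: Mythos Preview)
Your proof is correct and follows essentially the same strategy as the paper's: decompose according to the level set $\{KV^{-\tau}\geq\epsilon\}$, use the factorization $\n{u}^rK=(\n{u}^qV)^{\tau}(\n{u}^{p^*_s})^{1-\tau}\cdot KV^{-\tau}$ on the sublevel set, and combine the compact local embedding $\MsqpBR\hookrightarrow L^{rt'}(\BR)$ with the small-measure tail of the superlevel set. The only cosmetic differences are that the paper splits by level set first and then by radius (rather than the reverse) and handles the piece $A_\delta\cap\BRc$ via the explicit H\"older exponent $1-\tfrac{1}{t}-\tfrac{r}{p^*_s}>0$ in \eqref{Eq21} instead of invoking absolute continuity of $\int K^t\dx$ on $A_\delta$.
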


\begin{proof}
Without loss of generality, assume $\bre{u_l:l\geq1}$ is a sequence of functions in $\MVsqpRn$ with $u_l\rightharpoonup0$ when $l\to\infty$ and $\nm{u_l}_{\MVsqpRn}$ uniformly bounded.
For each $\epsilon>0$, set $\Wc:=\bigl\{x\in\Rn:K(x)V^{-\tau}(x)\geq\epsilon\bigr\}$ and $\Wc^c:=\Rn\setminus\Wc$ to decompose
\begin{equation}\label{Eq19}
\IRn\n{u_l}^rK\dx=\int_{\Wc}\n{u_l}^rK\dx+\int_{\Wc^c}\n{u_l}^rK\dx.
\end{equation}

For the integral over $\Wc^c$, as $\frac{r-q\tau}{p^*_s}=1-\tau=\frac{r-q}{p^*_s-q}\in[0,1)$, it is easy to derive
\begin{equation}\label{Eq20}
\begin{split}
&\int_{\Wc^c}\n{u_l}^r\frac{K}{V^{\tau}}V^{\tau}\dx\leq\epsilon\IRn\n{u_l}^rV^{\tau}\dx
\leq\epsilon\Bigl(\IRn\n{u_l}^qV\dx\Bigr)^{\tau}\Bigl(\IRn\n{u_l}^{p^*_s}\dx\Bigr)^{\frac{r-q\tau}{p^*_s}}\\
\leq\,&\epsilon C'_1\Bigl(\IRn\n{u_l}^qV\dx\Bigr)^{\tau}\Bigl(\IIRn\ulsp\dxy\Bigr)^{\frac{r-q\tau}{p}}\leq\epsilon C'_1\nm{u_l}^r_{\MVsqpRn}
\end{split}
\end{equation}
in view of \eqref{Eq8}, with $C'_1>0$ an absolute constant independent of $u_l$ for any $l\geq1$.

For the integral over $\Wc$, noticing $\mathfrak{L}\pr{\Wc}<\infty$ and the fact that $\inf\limits_{\BR}V(x)\geq V_{\BR}>0$ yields the embedding $\MVsqpBR\to\MsqpBR\hookrightarrow L^{\frac{rt}{t-1}}(\BR)$ with $\frac{rt}{t-1}\in[r,p^*_s)$, we deduce
\begin{equation}\label{Eq21}
\begin{split}
&\int_{\Wc}\n{u_l}^rK\dx=\int_{\Wc\cap\BR}\n{u_l}^rK\dx+\int_{\Wc\cap\BR^c}\n{u_l}^rK\dx\\
\leq&\Bigl(\int_{\Wc}K^t\dx\Bigr)^{\frac{1}{t}}\Biggl\{\Bigl(\IBR\n{u_l}^{\frac{rt}{t-1}}\dx\Bigr)^{\frac{t-1}{t}}\\
&+\Bigl(\IRn\n{u_l}^{p^*_s}\dx\Bigr)^{\frac{r}{p^*_s}}\bre{\mathfrak{L}\pr{\Wc\cap\BR^c}}^{1-\frac{1}{t}-\frac{r}{p^*_s}}\Biggr\}\to0
\end{split}
\end{equation}
as $R\to\infty$ and $l\to\infty$ for a subsequence of $\bre{u_l:l\geq1}$ using the same notation.

Notice the embedding $\MVsqpRn\to\LrKRn$ is continuous provided $\frac{p^*_s}{p^*_s-r}\leq t\leq\infty$.
Hence, plugging \eqref{Eq20} and \eqref{Eq21} altogether back to \eqref{Eq19} finishes our proof completely.
\end{proof}

\begin{thm}\label{T5}
Assume $n\geq2$, $1\leq p\leq\infty$, $0<s<1$ with $sp<n$, and $1\leq p^*_s\leq r<q$.
Let $K(x),V(x)>0:\Rn\to\R$ be such that $K(x)\in\LtOm$ for some $t\in\bigl(\frac{q}{q-r},\infty\bigr]$ on each set $\Om$ of $\Rn$ with $\mathfrak{L}\pr{\Om}<\infty$, $\inf\limits_{\Rn}V(x)\geq V_0>0$, and $K(x)V^{-\tau}(x)$ vanishes at infinity for $\tau:=\frac{r-p^*_s}{q-p^*_s}$.
Then, the embedding $\MVsqpRn\hookrightarrow\LrKRn$ is compact.
\end{thm}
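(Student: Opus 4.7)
The plan is to follow the architecture of Theorem \ref{T4}'s proof very closely, swapping the roles of $\LpcRn$ and $\LqVRn$ to accommodate the reversed ordering $p^*_s\leq r<q$. I would take a sequence $\bre{u_l:l\geq1}\subset\MVsqpRn$ with $u_l\rightharpoonup0$ and $\nm{u_l}_{\MVsqpRn}$ uniformly bounded, and retain the same splitting
\[\IRn\n{u_l}^r K\dx=\int_{\Wc^c}\n{u_l}^r K\dx+\int_{\Wc}\n{u_l}^r K\dx,\]
with $\Wc:=\bre{x\in\Rn:K(x)V^{-\tau}(x)\geq\epsilon}$ as before.

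On $\Wc^c$, the exponent $\tau=(r-p^*_s)/(q-p^*_s)\in[0,1)$ is dictated by the relation $q\tau+p^*_s(1-\tau)=r$. Writing $\n{u_l}^r V^{\tau}=(\n{u_l}^q V)^{\tau}\cdot\n{u_l}^{r-q\tau}$ and applying H\"older with exponents $1/\tau$ and $1/(1-\tau)$ (the edge case $r=p^*_s$, $\tau=0$ is trivial), one checks $(r-q\tau)/(1-\tau)=p^*_s$, so the fractional Sobolev inequality \eqref{Eq8} converts the $\LpcRn$ factor into a $\br{u_l}^p_{s,p,\Rn}$-type quantity, producing the bound $\epsilon C\nm{u_l}^r_{\MVsqpRn}$ in exact parallel with \eqref{Eq20}.

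On $\Wc$, I would split as $\int_{\Wc\cap\BR}+\int_{\Wc\cap\BRc}$. The hypothesis $\inf_{\Rn}V\geq V_0>0$ gives $\MVsqpRn\to\MsqpRn$ continuously, hence $\nm{u_l}_{q,\Rn}$ uniformly bounded. The assumption $t>q/(q-r)$ forces $rt/(t-1)<q=\max\bre{p^*_s,q}$, so Proposition \ref{P1} supplies the compact embedding $\MsqpBR\hookrightarrow L^{rt/(t-1)}(\BR)$; a H\"older pairing against $K\in L^t(\Wc)$ then sends the bounded-ball piece to $0$ along a subsequence. On the tail $\Wc\cap\BRc$, a three-way H\"older with $\n{u_l}^r\in L^{q/r}(\Rn)$, $K\in L^t(\Wc)$, and the indicator of $\Wc\cap\BRc$ produces the measure factor $\mathfrak{L}(\Wc\cap\BRc)^{1-r/q-1/t}$, whose exponent is positive by the condition on $t$; since $KV^{-\tau}$ vanishes at infinity, $\mathfrak{L}(\Wc)<\infty$ and $\mathfrak{L}(\Wc\cap\BRc)\to0$ as $R\to\infty$, closing the estimate.

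The principal subtlety lies in the H\"older book-keeping on $\Wc^c$: one must confirm that $\tau=(r-p^*_s)/(q-p^*_s)$ belongs to $[0,1]$ under $p^*_s\leq r<q$ and that the weight $V^{\tau}$ is correctly apportioned between $\LqVRn$ and $\LpcRn$ so that the resulting powers of $\nm{u_l}_{\LqVRn}$ and $\br{u_l}_{s,p,\Rn}$ add up to $r$. Once this arithmetic is verified, what remains is a mechanical transcription of the proof of Theorem \ref{T4}, with $\LqVRn$ now absorbing the global-integrability role that $\LpcRn$ plays there.
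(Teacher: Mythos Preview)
Your proposal is correct and matches the paper's own proof essentially line for line: the paper states that \eqref{Eq20} carries over unchanged (your H\"older computation on $\Wc^c$ reproduces it exactly, since with $\tau=(r-p^*_s)/(q-p^*_s)$ one again has $(r-q\tau)/(1-\tau)=p^*_s$), while in \eqref{Eq21} the sole modification is to replace $p^*_s$ by $q$, which is precisely your three-way H\"older on $\Wc\cap\BRc$ using $\n{u_l}^r\in L^{q/r}(\Rn)$ via the embedding $\MVsqpRn\to\MsqpRn\to\LqRn$ granted by $\inf_{\Rn}V\geq V_0>0$.
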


\begin{proof}
The proof is a minor modification of Theorem \ref{T4} using the embeddings $\MVsqpRn\to\MsqpRn\to\LqRn$ and $\MVsqpBR\to\MsqpBR\hookrightarrow L^{\frac{rt}{t-1}}(\BR)$ for $\frac{rt}{t-1}\in[r,q)$.
There is no need for change in \eqref{Eq20} while the only change in \eqref{Eq21} is replacing $p^*_s$ by $q$.
One also notices the embedding $\MVsqpRn\to\LrKRn$ is continuous provided $\frac{q}{q-r}\leq t\leq\infty$.
\end{proof}

\begin{thm}\label{T6}
Assume $n\geq2$, $1\leq p\leq\infty$, $0<s<1$ with $sp<n$, and $1\leq p^*_s\leq r<q$.
Let $K(x),V(x)>0:\Rn\to\R$ satisfy $K(x)\in\LtlocRn$ for some $t\in\bigl(\frac{q}{q-r},\infty\bigr]$ and $K(x)V^{-\tau}(x)\to0$ uniformly for $\tau:=\frac{r-p^*_s}{q-p^*_s}$.
Then, the embedding $\MVsqpRn\hookrightarrow\LrKRn$ is compact.
\end{thm}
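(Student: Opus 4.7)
The theorem is a companion to Theorems \ref{T4}--\ref{T5}, and my plan is to run the same weak-to-strong compactness scheme while exploiting the stronger hypothesis that $K(x)V^{-\tau}(x)$ decays uniformly at infinity (rather than merely vanishing at infinity in the Lieb--Loss sense). Given $\bre{u_l:l\geq1}\subset\MVsqpRn$ with $u_l\rightharpoonup0$ and $\nm{u_l}_{\MVsqpRn}$ uniformly bounded, I aim to show $\IRn\n{u_l}^rK\dx\to0$ along a subsequence by splitting the integral at a single large ball $\BR$, rather than at the level set $\Wc$ used in Theorems \ref{T4}--\ref{T5}.

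For the tail over $\BRc$, given $\epsilon>0$ I would pick $R$ so large that $K(x)V^{-\tau}(x)\leq\epsilon$ on $\BRc$, giving
\begin{equation*}
\IBRc\n{u_l}^rK\dx\leq\epsilon\IRn\n{u_l}^rV^{\tau}\dx.
\end{equation*}
Writing $\n{u_l}^rV^{\tau}=\bigl(\n{u_l}^qV\bigr)^{\tau}\n{u_l}^{r-q\tau}$ and applying H\"older with exponents $\frac{1}{\tau}$ and $\frac{1}{1-\tau}$ (valid since $\tau\in[0,1)$ when $p^*_s\leq r<q$), the identity $\frac{r-q\tau}{1-\tau}=p^*_s$ together with the fractional Sobolev inequality \eqref{Eq8} reduces the right-hand side to $\epsilon\nm{u_l}^{q\tau}_{\LqVRn}\nm{u_l}^{r-q\tau}_{p^*_s,\Rn}\leq\epsilon C$, where I used $q\tau+(r-q\tau)=r$ together with the uniform $\MVsqpRn$-bound on $u_l$; the degenerate case $\tau=0$, i.e.\ $r=p^*_s$, is handled by \eqref{Eq8} alone.

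For the bulk over $\BR$, H\"older gives $\IBR\n{u_l}^rK\dx\leq\nm{K}_{L^t(\BR)}\nm{u_l}^r_{L^{rt/(t-1)}(\BR)}$, with $\nm{K}_{L^t(\BR)}<\infty$ since $K\in\LtlocRn$. The hypothesis $t>\frac{q}{q-r}$ forces $\frac{rt}{t-1}\in[r,q)$, and as $r\geq p^*_s$ one has $q=\max\bre{p^*_s,q}$; thus Proposition \ref{P1} supplies the compact embedding $\MsqpBR\hookrightarrow L^{rt/(t-1)}(\BR)$. The blanket assumption $\inf_{\BR}V\geq V_{\BR}>0$ from Section (IV.) makes $\bre{u_l}$ bounded in $\MsqpBR$ upon restriction to $\BR$, so a subsequence converges strongly in $L^{rt/(t-1)}(\BR)$, with limit forced to be $0$ by testing against $C^{\infty}_c(\BR)$.

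Combining the two pieces yields $\limsup_{l\to\infty}\IRn\n{u_l}^rK\dx\leq\epsilon C$ for every $\epsilon>0$, which closes the argument. The proof is essentially routine given Theorems \ref{T4}--\ref{T5}, so I do not anticipate a substantial obstacle; the only delicate point is the exponent bookkeeping in the H\"older interpolation, which is the sole place where the interplay between $p^*_s$, $r$, $q$, and $\tau$ is genuinely used.
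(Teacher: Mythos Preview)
Your proposal is correct and follows essentially the same route as the paper: decompose over $\BR$ and $\BRc$, control the tail via the uniform decay of $KV^{-\tau}$ combined with the H\"older interpolation between $\LqVRn$ and $\LpcRn$ (using $\frac{r-q\tau}{1-\tau}=p^*_s$ and \eqref{Eq8}), and handle the bulk via $K\in\LtlocRn$ together with the compact embedding $\MsqpBR\hookrightarrow L^{rt/(t-1)}(\BR)$ from Proposition~\ref{P1}. The only cosmetic difference is that the paper phrases the tail bound as $\nm{KV^{-\tau}}_{\infty,\BRc}\to0$ when $R\to\infty$, whereas you fix $\epsilon$ first and then choose $R$; these are equivalent.
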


\begin{proof}
Assume $\bre{u_l:l\geq1}$ is a sequence of functions in $\MVsqpRn$ with $u_l\rightharpoonup0$ when $l\to\infty$ and $\nm{u_l}_{\MVsqpRn}$ uniformly bounded.
Then, one can decompose
\begin{equation}\label{Eq22}
\IRn\n{u_l}^rK\dx=\IBR\n{u_l}^rK\dx+\IBRc\n{u_l}^rK\dx.
\end{equation}
For the integral over $\BR$, seeing that $\inf\limits_{\BR}V(x)\geq V_{\BR}>0$ yields the embedding $\MVsqpBR\to\MsqpBR\hookrightarrow L^{\frac{rt}{t-1}}(\BR)$ with $\frac{rt}{t-1}\in[r,q)$, one deduces
\begin{equation}\label{Eq23}
\IBR\n{u_l}^rK\dx\leq\Bigl(\IBR K^t\dx\Bigr)^{\frac{1}{t}}\Bigl(\IBR\n{u_l}^{\frac{rt}{t-1}}\dx\Bigr)^{\frac{t-1}{t}}\to0
\end{equation}
as $l\to\infty$ for a subsequence of $\bre{u_l:l\geq1}$ using the same notation.
For the integral over $\BRc$, one deduces, similar to \eqref{Eq20},
\begin{equation}
\begin{split}
&\IBRc\n{u_l}^rK\dx\leq\nm{KV^{-\tau}}_{\infty,\BRc}\Bigl(\IRn\n{u_l}^qV\dx\Bigr)^{\tau}\Bigl(\IRn\n{u_l}^{p^*_s}\dx\Bigr)^{\frac{r-q\tau}{p^*_s}}\\
\leq\,&C'_1\nm{KV^{-\tau}}_{\infty,\BRc}\nm{u_l}^r_{\MVsqpRn}\to0\nonumber
\end{split}
\end{equation}
as $R\to\infty$.
So, $u_l\to0$ in $\LrKRn$ for a subsequence relabeled with the same index $l$.

Notice the embedding $\MVsqpRn\to\LrKRn$ is continuous provided $K(x)\in\LtlocRn$ for $\frac{q}{q-r}\leq t\leq\infty$ and $K(x)V^{-\tau}(x)$ is eventually bounded at infinity for $\tau=\frac{r-p^*_s}{q-p^*_s}$.
\end{proof}

Finally, let's consider the case where $1\leq r<\min\bre{p^*_s,q}$ uncovered in the preceding results.

\begin{thm}\label{T7}
Assume $n\geq2$, $1\leq p\leq\infty$, $0<s<1$ with $sp<n$, and $1\leq r<\min\bre{p^*_s,q}\leq\max\bre{p^*_s,q}<\infty$.
Let $K(x),V(x)>0$ satisfy $K(x)\in\LtlocRn$ for some $t\in\bigl(\frac{\max\bre{p^*_s,q}}{\max\bre{p^*_s,q}-r},\infty\bigr]$ and $K^{\psi_{p^*_s}(\tau)}(x)V^{-\tau}(x)\in\LlRn$ for $\psi_{p^*_s}(\tau):=\frac{p^*_s+\tau(p^*_s-q)}{p^*_s-r}$ and some $\tau\in\bigl[0,\frac{r}{q-r}\bigr]$.
Then, the embedding $\MVsqpRn\hookrightarrow\LrKRn$ is compact.
\end{thm}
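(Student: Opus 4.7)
The proof plan is to follow the same $R$-splitting scheme used in Theorem \ref{T6}: take a sequence $\bre{u_l:l\geq 1}\subset\MVsqpRn$ with uniformly bounded norm and $u_l\rightharpoonup 0$, decompose
\[
\IRn\n{u_l}^rK\dx=\IBR\n{u_l}^rK\dx+\IBRc\n{u_l}^rK\dx
\]
exactly as in \eqref{Eq22}, show that each piece vanishes in the appropriate double limit $R\to\infty$, $l\to\infty$, and then read off continuity from the same inequalities applied on all of $\Rn$.

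The inner piece is handled verbatim as in \eqref{Eq23}. The hypothesis $t>\max\bre{p^*_s,q}/(\max\bre{p^*_s,q}-r)$ is equivalent to $rt/(t-1)<\max\bre{p^*_s,q}$, and since $V\geq V_{\BR}>0$ on $\BR$ we have $\MVsqpBR\to\MsqpBR$. Proposition \ref{P1} then supplies the compact embedding $\MsqpBR\hookrightarrow L^{rt/(t-1)}(\BR)$, so H\"{o}lder's inequality against $K\in L^t(\BR)$ together with the extraction of a subsequence drives $\IBR\n{u_l}^rK\dx\to 0$ as $l\to\infty$ for each fixed $R$.

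The outer piece is where the precise form of $\psi_{p^*_s}(\tau)$ earns its keep. The key observation is that $\psi_{p^*_s}(\tau)$ is exactly the exponent forced by a three-factor H\"{o}lder split: writing pointwise
\[
\n{u_l}^rK=\pr{\n{u_l}^qV}^{x_1}\pr{\n{u_l}^{p^*_s}}^{x_2}\pr{K^{\psi_{p^*_s}(\tau)}V^{-\tau}}^{x_3}
\]
and demanding that the powers of $V$ and $K$ reduce to $0$ and $1$ respectively pins down $x_3:=1/\psi_{p^*_s}(\tau)$, $x_1:=\tau/\psi_{p^*_s}(\tau)$, and $x_2:=(r\psi_{p^*_s}(\tau)-q\tau)/(p^*_s\psi_{p^*_s}(\tau))$; a short calculation shows that the H\"{o}lder constraint $x_1+x_2+x_3=1$ is equivalent to the displayed identity $\psi_{p^*_s}(\tau)=(p^*_s+\tau(p^*_s-q))/(p^*_s-r)$, so H\"{o}lder with exponents $1/x_i$ is legitimate. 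Restricting to $\BRc$, the first two factors are then uniformly bounded in $l,R$ by $\nm{u_l}_{\LqVRn}^{qx_1}$ and by $\nm{u_l}_{\MVsqpRn}^{p^*_sx_2}$ via the fractional Sobolev inequality \eqref{Eq8}, while the third factor $\pr{\IBRc K^{\psi_{p^*_s}(\tau)}V^{-\tau}\dx}^{1/\psi_{p^*_s}(\tau)}\to 0$ as $R\to\infty$ by the integrability hypothesis.

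The main technical obstacle I foresee is the exponent bookkeeping, specifically checking $x_2\geq 0$: this amounts to $r\geq\tau(q-r)$, which is exactly the admissible range $\tau\in[0,r/(q-r)]$ in the hypothesis; the remaining positivity $\psi_{p^*_s}(\tau)>0$ is guaranteed by $r<\min\bre{p^*_s,q}$. Continuity of the embedding $\MVsqpRn\to\LrKRn$ follows from exactly the same three-factor H\"{o}lder applied on $\Rn$ and the assumption $K^{\psi_{p^*_s}(\tau)}V^{-\tau}\in\LlRn$, after which a standard diagonal argument in $R$ and $l$ delivers the desired compactness.
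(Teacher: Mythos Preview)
Your proposal is correct and follows essentially the same route as the paper: the same decomposition \eqref{Eq22}, the same compact local embedding for the inner piece via Proposition~\ref{P1} and H\"{o}lder against $K\in L^t(\BR)$, and the same three-factor H\"{o}lder split for the outer piece. Your exponents $x_1,x_2,x_3$ coincide with the paper's $\mathfrak{x},\mathfrak{y},\mathfrak{z}$ (one checks $x_1=\tau\mathfrak{z}=\mathfrak{x}$, $x_3=\mathfrak{z}$, hence $x_2=\mathfrak{y}$), and your sign analysis matches the paper's range $\tau\in\bigl[0,\frac{r}{q-r}\bigr]$; the positivity of $\psi_{p^*_s}(\tau)$ when $q>p^*_s$ indeed follows from $r<p^*_s$ together with $\tau\leq\frac{r}{q-r}<\frac{p^*_s}{q-p^*_s}$.
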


\begin{proof}
Write $\mathfrak{x}=\frac{\tau(p^*_s-r)}{p^*_s+\tau(p^*_s-q)}$, $\mathfrak{y}=\frac{r+\tau(r-q)}{p^*_s+\tau(p^*_s-q)}$ and $\mathfrak{z}=\frac{p^*_s-r}{p^*_s+\tau(p^*_s-q)}$ with $\mathfrak{x}+\mathfrak{y}+\mathfrak{z}=1$.
Now, set $r_1=\mathfrak{x}^{-1}$, $r_2=\mathfrak{y}^{-1}$ and $r_3=\mathfrak{z}^{-1}$ to see, for $u\in\MVsqpRn$ on all domains $\Om$ in $\Rn$,
\begin{equation}\label{Eq24}
\begin{split}
&\IOm\n{u}^r\frac{K}{V^{\mathfrak{x}}}V^{\mathfrak{x}}\dx
\leq\Bigl(\IOm\n{u}^qV\dx\Bigr)^{\frac{1}{r_1}}\Bigl(\IOm\n{u}^{p^*_s}\dx\Bigr)^{\frac{1}{r_2}}\Bigl(\IOm K^{r_3}V^{-r_3\mathfrak{x}}\dx\Bigr)^{\frac{1}{r_3}}\\
\leq&\,C'_1\Bigl(\IOm\n{u}^qV\dx\Bigr)^{\frac{1}{r_1}}\Bigl(\IIRn\usp\dxy\Bigr)^{\frac{p^*_s}{pr_2}}\Bigl(\IOm K^{\psi_{p^*_s}(\tau)}V^{-\tau}\dx\Bigr)^{\frac{1}{r_3}}
\end{split}
\end{equation}
by {\sf H\"{o}lder}'s {\sf inequality} and \eqref{Eq8} with $p^*_s=r_2(r-q\mathfrak{x})$, provided $\mathfrak{x},\mathfrak{y},\mathfrak{z}\in\pr{0,1}$.

To have $\mathfrak{x}<1$, one sees $0<\tau<\frac{p^*_s}{q-r}$ when $q\leq p^*_s$, and either $0<\tau<\frac{p^*_s}{q-r}$ or $\frac{p^*_s}{q-p^*_s}<\tau<\infty$ when $q>p^*_s$.
To have $\mathfrak{x}>0$, one sees $0<\tau<\infty$ when $q\leq p^*_s$, and $0<\tau<\frac{p^*_s}{q-p^*_s}$ when $q>p^*_s$.

To have $\mathfrak{y}>0$, one sees $0<\tau<\frac{r}{q-r}$ when $q\leq p^*_s$, and either $0<\tau<\frac{r}{q-r}$ or $\frac{p^*_s}{q-p^*_s}<\tau<\infty$ when $q>p^*_s$.
To have $\mathfrak{y}<1$, one sees $0<\tau<\infty$ when $q\leq p^*_s$, and $0<\tau<\frac{p^*_s}{q-p^*_s}$ when $q>p^*_s$.

Notice $\mathfrak{x}=\tau\mathfrak{z}$.
To have $\mathfrak{z}<1$, one sees $0<\tau<\infty$ when $q\leq p^*_s$, and either $0<\tau<\frac{r}{q-p^*_s}$ or $\frac{p^*_s}{q-p^*_s}<\tau<\infty$ when $q>p^*_s$.

Summarize the above analyses to conclude that $\tau\in\bigl(0,\frac{r}{q-r}\bigr)$.

We certainly can take $\mathfrak{y}=0$ and see $\tau=\frac{r}{q-r}$ without having $p^*_s$ involved, and may also take $\mathfrak{x}=\tau=0$ corresponding to $\MVsqpRn=\DspRn$ with $V(x)\equiv0$.

Next, one has $\frac{q}{r_1}+\frac{p^*_s}{r_2}=q\mathfrak{x}+p^*_s\mathfrak{y}=r$ and thus the embedding $\MVsqpRn\to\LrKRn$, when replacing $\Om$ in \eqref{Eq24} by $\Rn$, is continuous (even for $\frac{\max\bre{p^*_s,q}}{\max\bre{p^*_s,q}-r}\leq t\leq\infty$).

Finally, using the same $\bre{u_l:l\geq1}$, one sees the decomposition \eqref{Eq22} and the estimate \eqref{Eq23}, because $\MVsqpBR\to\MsqpBR\hookrightarrow L^{\frac{rt}{t-1}}(\BR)$ with $\frac{rt}{t-1}\in[r,\max\bre{p^*_s,q})$.
For the integral over $\BRc$, we apply \eqref{Eq24} on $\Om=\BRc$ to derive, as $R\to\infty$,
\begin{equation}
\IBRc\n{u_l}^rK\dx\leq C'_1\nm{K^{\psi_{p^*_s}(\tau)}V^{-\tau}}^{\psi^{-1}_{p^*_s}(\tau)}_{1,\BRc}\nm{u_l}^r_{\MVsqpRn}\to0.\nonumber
\end{equation}
Thus, $u_l\to0$ in $\LrKRn$ for a subsequence relabeled with the same index $l$.
\end{proof}

Notice in Theorems \ref{T4} and \ref{T6}, we do not need $\inf\limits_{\Rn}V(x)\geq V_0>0$, which appeared in Theorem \ref{T5}.
This condition can be replaced by an integrability condition as in Theorem \ref{T7}.

\begin{cor}\label{C8}
Let $n\geq2$, $p\in\br{1,\infty}$, $s\in\pr{0,1}$ with $sp<n$, $1\leq p^*_s\leq r<q$, and $K(x),V(x)>0:\Rn\to\R$ with $K(x)\in\LtlocRn$ for some $t\in\bigl(\frac{q}{q-r},\infty\bigr]$ and $K^{\psi_{p^*_s}(\tau)}(x)V^{-\tau}(x)\in\LlRn$ for some $\tau\in\bigl[\frac{r}{q-r},\infty\bigr)$.
Then, the embedding $\MVsqpRn\hookrightarrow\LrKRn$ is compact.
\end{cor}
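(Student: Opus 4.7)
The plan is to mirror the three-factor H\"older interpolation used for Theorem \ref{T7} but with $\tau$ now drawn from the complementary range $\bigl[\frac{r}{q-r},\infty\bigr)$. I would keep the same formulas $\mathfrak{x}=\frac{\tau(p^*_s-r)}{p^*_s+\tau(p^*_s-q)}$, $\mathfrak{y}=\frac{r+\tau(r-q)}{p^*_s+\tau(p^*_s-q)}$, $\mathfrak{z}=\frac{p^*_s-r}{p^*_s+\tau(p^*_s-q)}$, whose algebraic identities $\mathfrak{x}+\mathfrak{y}+\mathfrak{z}=1$, $q\mathfrak{x}+p^*_s\mathfrak{y}=r$, $\mathfrak{x}=\tau\mathfrak{z}$, and $\mathfrak{z}^{-1}=\psi_{p^*_s}(\tau)$ hold irrespective of the sign of the denominator. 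The pointwise split
\begin{equation}
\n{u}^rK=\pr{\n{u}^qV}^{\mathfrak{x}}\cdot\n{u}^{p^*_s\mathfrak{y}}\cdot\pr{K^{\psi_{p^*_s}(\tau)}V^{-\tau}}^{\mathfrak{z}}\nonumber
\end{equation}
then delivers, via H\"older's inequality with reciprocal exponents $\mathfrak{x},\mathfrak{y},\mathfrak{z}$, exactly the master estimate \eqref{Eq24} on any domain $\Om\subseteq\Rn$.

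The first genuine step is to confirm that $\mathfrak{x},\mathfrak{y},\mathfrak{z}\in[0,1]$ in the new regime, where the common denominator $p^*_s+\tau(p^*_s-q)$ has flipped sign. Since the hypothesis $p^*_s\leq r$ forces $\frac{p^*_s}{q-p^*_s}\leq\frac{r}{q-r}$, every admissible $\tau$ lies past the zero of that denominator, and the numerators $p^*_s-r$, $\tau(p^*_s-r)$, and $r+\tau(r-q)$ are simultaneously nonpositive. A short monotonicity inspection then shows $\mathfrak{x}(\tau)$ decreases from $r/q$ to $(r-p^*_s)/(q-p^*_s)$ and $\mathfrak{z}(\tau)$ decreases from $(q-r)/q$ to $0$, both staying in $[0,1)$; the identity $\mathfrak{y}=1-(\tau+1)\mathfrak{z}$ then pins $\mathfrak{y}\in\bigl[0,(q-r)/(q-p^*_s)\bigr)$, with $\mathfrak{y}=0$ precisely at $\tau=\frac{r}{q-r}$, at which the decomposition degenerates to a two-term H\"older independent of $p^*_s$. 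Feeding these into \eqref{Eq24}, the fractional Sobolev inequality \eqref{Eq8}, and $q\mathfrak{x}+p^*_s\mathfrak{y}=r$ yields
\begin{equation}
\IOm\n{u}^rK\dx\leq C'_1\nm{u}^r_{\MVsqpRn}\Bigl(\IOm K^{\psi_{p^*_s}(\tau)}V^{-\tau}\dx\Bigr)^{\mathfrak{z}},\nonumber
\end{equation}
which already gives continuity of $\MVsqpRn\to\LrKRn$ from the hypothesis $K^{\psi_{p^*_s}(\tau)}V^{-\tau}\in\LlRn$.

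For the compactness I would then transcribe the localization scheme of Theorem \ref{T7}. Given $\bre{u_l:l\geq1}\subset\MVsqpRn$ with $u_l\rightharpoonup0$ and $\nm{u_l}_{\MVsqpRn}$ uniformly bounded, split $\IRn\n{u_l}^rK\dx=\IBR+\IBRc$. On $\BR$ the blanket Section~IV assumption $\inf_{\BR}V\geq V_{\BR}>0$ furnishes $\MVsqpBR\to\MsqpBR\hookrightarrow L^{\frac{rt}{t-1}}(\BR)$, compact by Proposition \ref{P1} since $\frac{rt}{t-1}<q=\max\bre{p^*_s,q}$ whenever $t>\frac{q}{q-r}$, so the interior integral vanishes along a subsequence exactly as in \eqref{Eq23}. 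On $\BRc$ the displayed estimate above bounds $\IBRc\n{u_l}^rK\dx$ by $C'_1\nm{u_l}^r_{\MVsqpRn}\bigl(\IBRc K^{\psi_{p^*_s}(\tau)}V^{-\tau}\dx\bigr)^{\mathfrak{z}}$, which tends to $0$ as $R\to\infty$ by dominated convergence, uniformly in $l$. The main obstacle, as anticipated, is the algebraic bookkeeping of the middle paragraph: once the sign flip of the denominator is properly tracked and all three of $\mathfrak{x},\mathfrak{y},\mathfrak{z}$ are certified to lie in $[0,1]$, the remainder of the argument is a faithful adaptation of Theorem \ref{T7}.
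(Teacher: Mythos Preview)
Your proposal is correct and follows essentially the same route as the paper's own proof: both reuse the three-factor H\"older splitting of Theorem~\ref{T7} with the identical quantities $\mathfrak{x},\mathfrak{y},\mathfrak{z}$, verify that these lie in $[0,1]$ for $\tau\in\bigl[\frac{r}{q-r},\infty\bigr)$ (the paper does this by listing the separate inequality conditions, you do it by a monotonicity argument and the observation that the common denominator $p^*_s+\tau(p^*_s-q)$ has flipped sign), and then run the same $\BR/\BRc$ localization with Proposition~\ref{P1} on balls and the tail estimate \eqref{Eq24}. The paper's proof is terser but structurally identical; your endpoint and limiting values $\mathfrak{x}(r/(q-r))=r/q$, $\mathfrak{z}(r/(q-r))=(q-r)/q$, and $\mathfrak{y}=0$ at $\tau=\frac{r}{q-r}$ match the paper's remark that this endpoint degenerates to a two-term H\"older independent of $p^*_s$.
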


\begin{proof}
The proof is essentially the same as that of Theorem \ref{T7}.
One keeps in mind $q>r\geq p^*_s$ when ensuring $\mathfrak{x},\mathfrak{y},\mathfrak{z}\in\pr{0,1}$.
In fact, to see $\mathfrak{x}>0$, one has $\tau>\frac{p^*_s}{q-p^*_s}$ and to see $\mathfrak{x}<1$, one has $\tau>\frac{p^*_s}{q-r}$; to see $\mathfrak{y}>0$, one has $\tau>\frac{r}{q-r}$ and to see $\mathfrak{y}<1$, one has $\tau>\frac{p^*_s}{q-p^*_s}$; to see $\mathfrak{z}<1$, one has $\tau>\frac{r}{q-p^*_s}$.
Summarizing these leads to $\tau\in\bigl(\frac{r}{q-r},\infty\bigr)$ while $\mathfrak{y}=0$ implies $\tau=\frac{r}{q-r}$ without involving $p^*_s$.
Notice $\MVsqpRn\to\LrKRn$ is continuous for $\frac{q}{q-r}\leq t\leq\infty$.
\end{proof}

\noindent {\bf(2.)} When $sp\geq n$, we similarly have the compact embedding results as follows.

Recall the fractional {\sf Gagliardo-Nirenberg inequality} in \cite[Lemma 2.1]{NS} says that
\begin{equation}\label{Eq25}
\nm{u}_{r,\Rn}\leq C_2\nm{u}^{\theta}_{q,\Rn}\br{u}_{s,p,\Rn}^{1-\theta},\hspace{6mm}\forall\hspace{2mm}u\in\MsqpRn.
\end{equation}
Here, $n\geq2$, $1<p\leq\infty$, $1\leq q<\infty$, $0<s<1$, $\frac{1}{r}=(1-\theta)\frac{n-sp}{np}+\theta\frac{1}{q}$ with $\theta\in\br{0,1}$, and $r$ lies in between $q$ and $p^*_s$ when $sp<n$ while $q\leq r<\infty$ when $sp\geq n$, with $C_2>0$ an absolute constant depending on $n,p,q,r,s$.

\begin{thm}\label{T9}
Assume $n\geq2$, $1\leq p\leq\infty$, $0<s<1$ with $sp\geq n$, and $1\leq q\leq r<\infty$.
Let $K(x),V(x)>0:\Rn\to\R$ be such that $K(x)\in\LtOm$ for some $t\in(1,\infty]$ on each set $\Om$ of $\Rn$ with $\mathfrak{L}\pr{\Om}<\infty$, $\inf\limits_{\Rn}V(x)\geq V_0>0$, and $K(x)V^{-\tau}(x)$ vanishes at infinity for some $\tau\in\left(0,1\right)$ if $q<r$ or $\tau=1$ if $q=r$.
Then, the embedding $\MVsqpRn\hookrightarrow\LrKRn$ is compact.
\end{thm}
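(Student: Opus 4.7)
The plan is to mirror the strategy of Theorems \ref{T4}--\ref{T5}, with the fractional Sobolev inequality \eqref{Eq8} (unavailable for $sp\geq n$) replaced throughout by the fractional Gagliardo--Nirenberg inequality \eqref{Eq25}; the latter furnishes a continuous embedding $\MsqpRn\to\LrRn$ for every $r\in[q,\infty)$ whenever $sp\geq n$, which makes the range of admissible exponents in the H\"older splittings below much more generous than in the preceding theorems.

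Take an arbitrary sequence $\bre{u_l:l\geq1}\subset\MVsqpRn$ with $u_l\rightharpoonup0$ and $\nm{u_l}_{\MVsqpRn}$ uniformly bounded; it suffices to extract a subsequence converging strongly to $0$ in $\LrKRn$. Fix $\epsilon>0$, set $\Wc:=\bre{x\in\Rn:K(x)V^{-\tau}(x)\geq\epsilon}$, and decompose $\IRn\n{u_l}^rK\dx=\int_{\Wc}\n{u_l}^rK\dx+\int_{\Wc^c}\n{u_l}^rK\dx$. Since $K(x)V^{-\tau}(x)$ vanishes at infinity, $\mathfrak{L}(\Wc)<\infty$, hence $K\in L^t(\Wc)$ by hypothesis.

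For the piece over $\Wc^c$, the pointwise bound $K\leq\epsilon V^{\tau}$ gives $\int_{\Wc^c}\n{u_l}^rK\dx\leq\epsilon\int_{\Wc^c}\n{u_l}^rV^{\tau}\dx$. When $q=r$ and $\tau=1$ this is immediately $\leq\epsilon\nm{u_l}^q_{\LqVRn}$. When $q<r$ and $\tau\in(0,1)$, H\"older's inequality with conjugate exponents $1/\tau$ and $1/(1-\tau)$ yields $\int\n{u_l}^rV^{\tau}\dx\leq\bigl(\IRn\n{u_l}^qV\dx\bigr)^{\tau}\bigl(\IRn\n{u_l}^{r'}\dx\bigr)^{1-\tau}$ for $r':=\frac{r-q\tau}{1-\tau}$, which satisfies $r'>q$; since $V\geq V_0>0$, \eqref{Eq25} controls $\nm{u_l}_{r',\Rn}$ by $\nm{u_l}_{\MVsqpRn}$. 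In either case the $\Wc^c$-integral is at most $\epsilon C$ for some $C$ independent of $l$.

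For the piece over $\Wc$, further split $\Wc=(\Wc\cap\BR)\cup(\Wc\cap\BR^c)$. On $\Wc\cap\BR$, Proposition \ref{P1} supplies the compact embedding $\MVsqpBR\to\MsqpBR\hookrightarrow L^{rt/(t-1)}(\BR)$ (valid for every finite $rt/(t-1)\geq r$ because $sp\geq n$), so H\"older's inequality together with strong convergence on balls for a subsequence drives this contribution to $0$ as $l\to\infty$. On $\Wc\cap\BR^c$, H\"older combined with the global bound $\nm{u_l}_{rt/(t-1),\Rn}\leq C'\nm{u_l}_{\MVsqpRn}$ from \eqref{Eq25} gives a majorant proportional to $\bigl(\int_{\Wc\cap\BR^c}K^t\dx\bigr)^{1/t}$, which tends to $0$ as $R\to\infty$ by dominated convergence since $K\in L^t(\Wc)$ and $\mathfrak{L}(\Wc)<\infty$. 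Sending $l\to\infty$, then $R\to\infty$, then $\epsilon\to0^+$ finishes the proof. The main obstacle is the orderly orchestration of these three limits together with the verification that every Gagliardo--Nirenberg exponent occurring lies in the admissible range $[q,\infty)$; fortunately this is automatic here precisely because $sp\geq n$ removes the upper constraint $r<p^*_s$ that complicated the splittings in Theorems \ref{T4}--\ref{T7}.
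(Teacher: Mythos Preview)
Your argument is correct and follows essentially the same route as the paper: the decomposition $\Rn=\Wc\cup\Wc^c$, the H\"older estimate on $\Wc^c$ with exponent $r'=\frac{r-q\tau}{1-\tau}\geq q$ controlled via \eqref{Eq25}, and the further splitting $\Wc=(\Wc\cap\BR)\cup(\Wc\cap\BR^c)$ with local compactness from Proposition~\ref{P1} are exactly the paper's proof. The only cosmetic difference is on $\Wc\cap\BR^c$: the paper inserts an extra exponent $\tilde{t}>\frac{t}{t-1}$ in a three-factor H\"older inequality so as to produce the vanishing measure factor $\bre{\mathfrak{L}(\Wc\cap\BR^c)}^{1-\frac{1}{t}-\frac{1}{\tilde{t}}}$, whereas you use the two-factor H\"older bound and let $\bigl(\int_{\Wc\cap\BR^c}K^t\bigr)^{1/t}\to0$ directly by absolute continuity of the integral---both are valid and equivalent in spirit.
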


\begin{proof}
First, note the continuous embedding $\MVsqpRn\to\MsqpRn$.
Employing the same notations as in Theorem \ref{T4}, and seeing $\frac{r-q\tau}{1-\tau}\geq q$ and \eqref{Eq19}, one derives, like \eqref{Eq20},
\begin{equation}
\begin{split}
&\int_{\Wc^c}\n{u_l}^r\frac{K}{V^{\tau}}V^{\tau}\dx\leq\epsilon\IRn\n{u_l}^rV^{\tau}\dx
\leq\epsilon\Bigl(\IRn\n{u_l}^qV\dx\Bigr)^{\tau}\Bigl(\IRn\n{u_l}^{\frac{r-q\tau}{1-\tau}}\dx\Bigr)^{1-\tau}\\
\leq\,&\epsilon C'_2\nm{u_l}^{q\tau}_{\LqVRn}\nm{u_l}^{r-q\tau}_{\MsqpRn}\leq\epsilon C'_2\nm{u_l}^r_{\MVsqpRn}\nonumber
\end{split}
\end{equation}
by virtue of \eqref{Eq25}, with $C'_2>0$ an absolute constant independent of $u_l$ for any $l\geq1$.
Moreover, seeing $\MVsqpBR\to\MsqpBR\hookrightarrow L^{\frac{rt}{t-1}}(\BR)$ with $\frac{rt}{t-1}\in[r,\infty)$, one observes that, for some $\tilde{t}\in\bigl(\frac{t}{t-1},\infty\bigr]$ with $\lim\limits_{\tilde{t}\to\infty}\Bigl(\IRn\n{u_l}^{r\tilde{t}}\dx\Bigr)^{\frac{1}{\tilde{t}}}=\IRn\n{u_l}^{r}\dx$, like \eqref{Eq21},
\begin{equation}
\begin{split}
\int_{\Wc}\n{u_l}^rK\dx\leq&\Bigl(\int_{\Wc}K^t\dx\Bigr)^{\frac{1}{t}}\Biggl\{\Bigl(\IBR\n{u_l}^{\frac{rt}{t-1}}\dx\Bigr)^{\frac{t-1}{t}}\\
&+\Bigl(\IRn\n{u_l}^{r\tilde{t}}\dx\Bigr)^{\frac{1}{\tilde{t}}}\bre{\mathfrak{L}\pr{\Wc\cap\BR^c}}^{1-\frac{1}{t}-\frac{1}{\tilde{t}}}\Biggr\}\to0\nonumber
\end{split}
\end{equation}
as $R\to\infty$ and $l\to\infty$ for a subsequence of $\bre{u_l:l\geq1}$ using the same notation.

Note the embedding $\MVsqpRn\to\LrKRn$ is continuous if $\frac{t}{t-1}\leq\tilde{t}\leq\infty$.
\end{proof}

\begin{thm}\label{T10}
Assume $n\geq2$, $1\leq p\leq\infty$, $0<s<1$ with $sp\geq n$, and $1\leq r<q<\infty$.
Let $K(x),V(x)>0:\Rn\to\R$ satisfy $K(x)\in\LtlocRn$ for some $t\in(1,\infty]$, $\inf\limits_{\Rn}V(x)\geq V_0>0$, and $K^{\psi_{\tilde{q}}(\tau)}(x)V^{-\tau}(x)\in\LlRn$ for $\psi_{\tilde{q}}(\tau):=\frac{\tilde{q}+\tau(\tilde{q}-q)}{\tilde{q}-r}$, some $\tilde{q}\in\pr{q,\infty}$, and some $\tau\in\bigl(0,\frac{r}{q-r}\bigr]$.
Then, the embedding $\MVsqpRn\hookrightarrow\LrKRn$ is compact.
\end{thm}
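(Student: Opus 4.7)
The plan is to mimic the proof of Theorem \ref{T7}, replacing the fractional Sobolev inequality \eqref{Eq8} (which is unavailable when $sp\geq n$) by the fractional Gagliardo-Nirenberg inequality \eqref{Eq25}, in which the critical exponent $p^*_s$ is now replaced by an arbitrary $\tilde{q}\in\pr{q,\infty}$.

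The algebraic heart is the choice of the H\"older triple. Setting $\mathfrak{z}:=\frac{\tilde{q}-r}{\tilde{q}+\tau(\tilde{q}-q)}$, $\mathfrak{x}:=\tau\mathfrak{z}$, and $\mathfrak{y}:=1-\mathfrak{x}-\mathfrak{z}=\frac{r-\tau(q-r)}{\tilde{q}+\tau(\tilde{q}-q)}$, I expect to verify that $\mathfrak{x},\mathfrak{z}\in\pr{0,1}$ and $\mathfrak{y}\in[0,1)$ under the hypothesis $0<\tau\leq\frac{r}{q-r}$, that $q\mathfrak{x}+\tilde{q}\mathfrak{y}=r$, and that $\mathfrak{z}^{-1}=\psi_{\tilde{q}}(\tau)$. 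The limiting case $\tau=\frac{r}{q-r}$ makes $\mathfrak{y}=0$ and removes any dependence on $\tilde{q}$, exactly parallel to the corresponding remark in the proof of Theorem \ref{T7}. With these exponents in hand, for any domain $\Om\subseteq\Rn$ I would write the pointwise identity $\n{u}^rK=(\n{u}^qV)^{\mathfrak{x}}\n{u}^{\tilde{q}\mathfrak{y}}\bigl(K^{\psi_{\tilde{q}}(\tau)}V^{-\tau}\bigr)^{\mathfrak{z}}$ and apply H\"older's inequality with exponents $1/\mathfrak{x},1/\mathfrak{y},1/\mathfrak{z}$, followed by \eqref{Eq25} applied at the exponent $\tilde{q}>q$ to absorb $\bigl(\IOm\n{u}^{\tilde{q}}\bigr)^{\mathfrak{y}}$ into $\nm{u}_{\MsqpRn}^{\tilde{q}\mathfrak{y}}\leq C\nm{u}_{\MVsqpRn}^{\tilde{q}\mathfrak{y}}$. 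Applied on $\Om=\Rn$, this already yields the continuous embedding $\MVsqpRn\to\LrKRn$.

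For compactness, I would proceed exactly as in Theorem \ref{T6}. Take $\bre{u_l:l\geq1}\subset\MVsqpRn$ with $u_l\rightharpoonup0$ and uniformly bounded norms, and split
\begin{equation}
\IRn\n{u_l}^rK\dx=\IBR\n{u_l}^rK\dx+\IBRc\n{u_l}^rK\dx.\nonumber
\end{equation}
The local integral is handled by Proposition \ref{P1}: since $sp\geq n$, the compact embedding $\MVsqpBR\to\MsqpBR\hookrightarrow L^{\frac{rt}{t-1}}(\BR)$ is available for \emph{any} finite $\frac{rt}{t-1}$, so one H\"older step against $K\in L^t(\BR)$ sends this piece to zero along a subsequence. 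On $\BRc$, I apply the three-factor H\"older estimate from the previous paragraph with $\Om=\BRc$; the first two factors stay uniformly bounded, while
\begin{equation}
\Bigl(\IBRc K^{\psi_{\tilde{q}}(\tau)}V^{-\tau}\dx\Bigr)^{\mathfrak{z}}\to0\hspace{4mm}\mathrm{as}\hspace{2mm}R\to\infty\nonumber
\end{equation}
by the dominated convergence theorem and the hypothesis $K^{\psi_{\tilde{q}}(\tau)}V^{-\tau}\in\LlRn$. A diagonal argument then produces a subsequence on which $u_l\to0$ in $\LrKRn$.

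I do not anticipate any real obstacle; the only genuinely new piece compared with Theorem \ref{T7} is replacing the single-exponent Sobolev bound by the $\tilde{q}$-parameterised Gagliardo-Nirenberg bound, which is why the statement contains the extra free parameter $\tilde{q}\in\pr{q,\infty}$. The mildly delicate bookkeeping is the verification that the range $0<\tau\leq\frac{r}{q-r}$ is precisely what keeps $\mathfrak{x},\mathfrak{y},\mathfrak{z}$ admissible, together with the observation that the endpoint $\tau=\frac{r}{q-r}$ (giving $\mathfrak{y}=0$) is a legitimate case of the argument and degenerates to a two-factor H\"older estimate not involving $\tilde{q}$.
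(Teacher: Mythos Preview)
Your proposal is correct and follows essentially the same approach as the paper: the same H\"older triple $\mathfrak{x},\mathfrak{y},\mathfrak{z}$ (the paper writes them as $\tilde{\mathfrak{x}},\tilde{\mathfrak{y}},\tilde{\mathfrak{z}}$), the same use of \eqref{Eq25} in place of \eqref{Eq8}, and the same $\BR/\BRc$ decomposition with Proposition~\ref{P1} handling the local piece. The only cosmetic difference is that the paper records the explicit constant $V_0^{-\tilde{q}\theta/(q\tilde{r}_2)}$ arising from the step $\nm{u}_{\MsqpRn}\leq C\nm{u}_{\MVsqpRn}$, which you absorb into an unnamed constant.
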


\begin{proof}
First, note the continuous embedding $\MVsqpRn\to\MsqpRn$.
Employing the same notations as in Theorem \ref{T7} with $\tilde{\mathfrak{x}}=\frac{\tau(\tilde{q}-r)}{\tilde{q}+\tau(\tilde{q}-q)}$, $\tilde{\mathfrak{y}}=\frac{r+\tau(r-q)}{\tilde{q}+\tau(\tilde{q}-q)}$ and $\tilde{\mathfrak{z}}=\frac{\tilde{q}-r}{\tilde{q}+\tau(\tilde{q}-q)}$ for an arbitrarily chosen $\tilde{q}\in\pr{q,\infty}$ plus $\tilde{r}_1=\tilde{\mathfrak{x}}^{-1}$, $\tilde{r}_2=\tilde{\mathfrak{y}}^{-1}$ and $\tilde{r}_3=\tilde{\mathfrak{z}}^{-1}$, and seeing \eqref{Eq25}, one derives, like \eqref{Eq24}, the continuous embedding $\MVsqpRn\to\LrKRn$ since $\frac{q}{\tilde{r}_1}+\frac{\tilde{q}}{\tilde{r}_2}=q\tilde{\mathfrak{x}}+\tilde{q}\tilde{\mathfrak{y}}=r$.

In addition, similar calculations lead to $\tau\in\bigl(0,\frac{r}{q-r}\bigr]$.

Finally, using the same $\bre{u_l:l\geq1}$, one sees the decomposition \eqref{Eq22} and the estimate \eqref{Eq23}, because $\MVsqpBR\to\MsqpBR\hookrightarrow L^{\frac{rt}{t-1}}(\BR)$ with $\frac{rt}{t-1}\in[r,\infty)$.
For the integral over $\BRc$, one analogously has, as $R\to\infty$,
\begin{equation}
\IBRc\n{u_l}^rK\dx\leq C'_2V^{-\frac{\tilde{q}\theta}{q\tilde{r}_2}}_0\nm{K^{\psi_{\tilde{q}}(\tau)}V^{-\tau}}^{\psi^{-1}_{\tilde{q}}(\tau)}_{1,\BRc}\nm{u_l}^r_{\MVsqpRn}\to0.\nonumber
\end{equation}
Thus, $u_l\to0$ in $\LrKRn$ for a subsequence relabeled with the same index $l$.
\end{proof}

\noindent {\bf(3.)} Some applications.

One feels free to set either $V(x)\equiv1$ or $K(x)\equiv1$ to see some compact embedding results, in particular, $\MVsqpRn\hookrightarrow\LqRn$ and $\MVsqpRn\hookrightarrow\LlRn$ for appropriate weight functions $V(x)$; when $p=q$, one has $\WVspRn\hookrightarrow\LpRn$ and $\WVspRn\hookrightarrow\LlRn$.

Finally, following Servadei and Valdinoci \cite[Section 3]{SV} (see also \cite[Section 3.1]{MRS}) or applying Auchmuty \cite{Au}, for the standard fractional Laplacian $\pr{-\Delta}^s$, the eigenvalue problems
\begin{equation}\label{Eq26}
-\pr{-\Delta}^sf(x)+V(x)f(x)=\lambda K(x)f(x)\hspace{2mm}\mathrm{in}\hspace{2mm}\Rn,
\end{equation}
as well as the eigenvalue problems
\begin{equation}\label{Eq27}
-\pr{-\Delta}^sg(x)=\delta\tilde{K}(x)g(x)\hspace{2mm}\mathrm{in}\hspace{2mm}\Rn
\end{equation}
if $sp<n$, possess families of eigenvalues $\bre{\lambda_k>0:k\geq1}$ and $\bre{\delta_l>0:l\geq1}$, and sequences of associated eigenfunctions $\bre{f_k\in\WVstRn:k\geq1}$ and $\bre{g_l\in\DstRn:l\geq1}$, respectively, by virtue of Theorem \ref{T4} (when taking $p=q=r=2$ to work in Hilbert spaces) and Theorem \ref{T7} (when taking $V(x)\equiv0$ and $p=r=2$ to work in Hilbert spaces).

$\lambda_1,\delta_1$ are simple, $\lambda_k,\delta_l$ have finite multiplicities, and $\bre{\lambda_k>0:k\geq1}$ and $\bre{\delta_l>0:l\geq1}$ are increasing with $\lim\limits_{k\to\infty}\lambda_k=\infty$ and $\lim\limits_{l\to\infty}\delta_l=\infty$.
Besides, $f_1,g_1$ are nonnegative in $\Rn$, and the sequences $\bre{f_k\in\WVstRn:k\geq1}$ and $\bre{g_l\in\DstRn:l\geq1}$ provide orthogonal bases to the spaces $\WVstRn,\LtKRn$ and $\DstRn,\LtKtRn$, respectively.

The procedure is standard now, with details left for the interested reader.

\vskip 21pt

\bibliographystyle{amsplain}

\begin{thebibliography}{99}


%
 \bibitem{Au} G. Auchmuty. Bases and comparison results for linear elliptic eigenproblems.
 {\sl J. Math. Anal. Appl.} {\bf390} (2012), 394-406.

%
 \bibitem{Be} W. Beckner. Sobolev inequalities, the Poisson semigroup, and analysis on the sphere $S^n$.
 {\sl Proc. Nat. Acad. Sci. U.S.A.} {\bf89} (1992), 4816-4819.

%
 \bibitem{BM} J.C. Bellido and C. Mora-Corral. Existence for nonlocal variational problems in peridynamics.
 {\sl SIAM J. Math. Anal.} {\bf46} (2014), 890-916.

%
\bibitem{BL} H. Berestycki and P.-L. Lions. Nonlinear scalar field equations. I. Existence of a ground state.
 {\sl Arch. Rational Mech. Anal.} {\bf82} (1983), 313-345.

%
\bibitem{BVS} D. Bonheure and J. Van Schaftingen. Groundstates for the nonlinear Schr\"{o}dinger equation with potential vanishing at infinity.
 {\sl Ann. Mat. Pura Appl.} {\bf189} (2010), 273-301.

%
\bibitem{BLP} L. Brasco, E. Lindgren, and E. Parini. The fractional Cheeger problem.
 {\sl Interfaces Free Bound.} {\bf16} (2014), 419-458.

%
 \bibitem{Ch} R. Chiappinelli. Compact embeddings of some weighted Sobolev spaces on $\mathbb{R}^N$.
 {\sl Math. Proc. Cambridge Philos. Soc.} {\bf117} (1995), 333-338.

%
 \bibitem{DD} F. Demengel and G. Demengel.
 {\bf Functional spaces for the theory of elliptic partial differential equations}. Springer, London, 2012.

%
 \bibitem{DPV} E. Di Nezza, G. Palatucci, and E. Valdinoci. Hitchhiker's guide to the fractional Sobolev spaces.
 {\sl Bull. Sci. Math.} {\bf136} (2012), 521-573.

%
\bibitem{dO} J.M. do \'{O}. $N$-Laplacian equations in $\mathbb{R}^N$ with critical growth.
 {\sl Abstr. Appl. Anal.} {\bf2} (1997), 301-315.

%
 \bibitem{DiDr} I. Drelichman and R.G. Dur\'{a}n. Improved Poincar\'{e} inequalities in fractional Sobolev spaces.
 {\sl Ann. Acad. Sci. Fenn. Math.} {\bf43} (2018), 885-903.

%
 \bibitem{Ha1} Q. Han. Positive solutions of elliptic problems involving both critical Sobolev nonlinearities on exterior regions.
 {\sl Monatsh. Math.} {\bf176} (2015), 107-141.

%
 \bibitem{Ha2} Q. Han. Compact embedding results of Sobolev spaces and positive solutions to an elliptic equation.
 {\sl Proc. Roy. Soc. Edinburgh Sect. A} {\bf146} (2016), 693-721.

%
 \bibitem{Ha3} Q. Han. Compact embedding results of Sobolev spaces and existence of positive solutions to quasilinear equations.
 {\sl Bull. Sci. Math.} {\bf141} (2017), 46-71.

%
 \bibitem{Ha4} Q. Han. Compact Sobolev embeddings and positive solutions to a quasilinear equation with mixed nonlinearities.
 {\sl Submitted}. https://arxiv.org/abs/1901.02526

%
 \bibitem{Iu} S. Iula. A note on the Moser-Trudinger inequality in Sobolev-Slobodeckij spaces in dimension one.
 {\sl Atti Accad. Naz. Lincei Rend. Lincei Mat. Appl.} {\bf28} (2017), 871-884.

%
\bibitem{LR} Y. Li and B. Ruf. A sharp Trudinger-Moser type inequality for unbounded domains in $\mathbb{R}^n$.
 {\sl Indiana Univ. Math. J.} {\bf57} (2008), 451-480.

%
 \bibitem{LL} E. Lieb and M. Loss.
 {\bf Analysis}. American Mathematical Society, Providence, RI, 2001.

%
\bibitem{Li} P.-L. Lions. The concentration-compactness principle in the calculus of variations. The locally compact case. II.
 {\sl Ann. Inst. H. Poincar\'{e} Anal. Non Lin\'{e}aire} {\bf1} (1984), 223-283.

%
 \bibitem{Ma} V.G. Maz'ya.
 {\bf Sobolev spaces}. Springer, Heidelberg, 2011.

%
 \bibitem{MRS} G. Molica Bisci, V.D. Radulescu, and R. Servadei.
 {\bf Variational methods for nonlocal fractional problems}. Cambridge University Press, Cambridge, 2016.

%
 \bibitem{NS} H.-M. Nguyen and M. Squassina. Fractional Caffarelli-Kohn-Nirenberg inequalities.
 {\sl J. Funct. Anal.} {\bf274} (2018), 2661-2672.

%
 \bibitem{PR} E. Parini and B. Ruf. On the Moser-Trudinger inequality in fractional Sobolev-Slobodeckij spaces.
 {\sl Atti Accad. Naz. Lincei Rend. Lincei Mat. Appl.} {\bf29} (2018), 315-319.

%
 \bibitem{Ru} B. Ruf. A sharp Trudinger-Moser type inequality for unbounded domains in $\mathbb{R}^2$.
 {\sl J. Funct. Anal.} {\bf219} (2005), 340-367.

%
 \bibitem{Sc} M. Schneider. Compact embeddings and indefinite semilinear elliptic problems.
 {\sl Nonlinear Anal.} {\bf51} (2002), 283-303.

%
 \bibitem{SV} R. Servadei and E. Valdinoci. Variational methods for non-local operators of elliptic type.
 {\sl Discrete Contin. Dyn. Syst.} {\bf33} (2013), 2105-2137.


\end{thebibliography}

\end{document}